\newtheorem{theorem}{Theorem}[section]
\newtheorem{proposition}[theorem]{Proposition}
\newtheorem{lemma}[theorem]{Lemma}
\theoremstyle{definition}
\newtheorem{problem}[theorem]{Problem}
\newtheorem{question}[theorem]{Question}
\newtheorem{remark}[theorem]{Remark}
\newlist{Case}{enumerate}{2}
\setlist[Case, 1]{%
    label           =   {\bfseries Case \arabic*.},
    labelindent=1em ,labelwidth=1.3cm, labelsep*=1em, leftmargin =!
}
\setlist[Case, 2]{%
    label           =   {\bfseries Subcase \arabic{Casei}.\arabic*.},
    labelindent=-1em ,labelwidth=1.3cm, labelsep*=1em, leftmargin =!
}
\title{Local rainbow colorings for various graphs}
\author{
Xinbu Cheng\thanks{Laboratory of Mathematics and Complex Systems, Ministry of Education, School of Mathematical Sciences, Beijing Normal University, Beijing, China. Emails: chengxinbu2006@sina.com.}
\and
Zixiang Xu\thanks{Extremal Combinatorics and Probability Group (ECOPRO), Institute for Basic Science (IBS), Daejeon, South Korea. Email: zixiangxu@ibs.re.kr. Supported by IBS-R029-C4.}
}
\date{}
\begin{document}

\maketitle
\begin{abstract}
Motivated by a problem in theoretical computer science suggested by Wigderson, Alon and Ben-Eliezer studied the following extremal problem systematically one decade ago. Given a graph $H$, let $C(n,H)$ be the minimum number $k$ such that the following holds. There are $n$ colorings of $E(K_{n})$ with $k$ colors, each associated with one of the vertices of $K_{n}$, such that for every copy $T$ of $H$ in $K_{n}$, at least one of the colorings that are associated with $V(T)$ assigns distinct colors to all the edges of $E(T)$. In this paper, we obtain several new results in this problem including:
\begin{itemize}
    \item For paths of short length, we show that $C(n,P_{4})=\Omega(n^{1/5})$ and $C(n,P_{t})=\Omega(n^{1/3})$ with $t\in\{5,6\}$, which significantly improve the previously known lower bounds $(\log{n})^{\Omega(1)}$.
    \item We make progress on the problem of Alon and Ben-Eliezer about complete graphs, more precisely, we show that $C(n,K_{r})=\Omega(n^{2/3})$ when $r\geqslant 8$. This provides the first instance of graph for which the lower bound goes beyond the natural barrier $\Omega(n^{1/2})$. Moreover, we prove that $C(n,K_{s,t})=\Omega(n^{2/3})$ for $t\geqslant s\geqslant 7$.
    \item When $H$ is a star with at least $4$ leaves, a matching of size at least $4$, or a path of length at least $7$, we give the new lower bound for $C(n,H)$. We also show that for any graph $H$ with at least $6$ edges, $C(n,H)$ is polynomial in $n$. All of these improve the corresponding results obtained by Alon and Ben-Eliezer.
\end{itemize}

\medskip
\noindent {{\it Key words and phrases\/}: Edge coloring, rainbow graphs, sparse graphs, dense graphs.}

\smallskip

\noindent {{\it AMS subject classifications\/}: 05C35, 05C15.}
\end{abstract}

\section{Introduction}
One of the hardest problems of complexity theory is to prove nontrivial lower bounds on fundamental complexity measures for concrete computing problems. In 1993, Karchmer~\cite{1993IEEESP} gave a lower bound on non-deterministic circuit size and presented a new proof for the exponential monotone size lower bound for the clique function. Later, Wigderson~\cite{1993Wigderson} discussed the achievements, potential, and challenges of the elegant fusion method introduced by Karchmer~\cite{1993IEEESP}, which unifies the previous approximation method of Razborov~\cite{1989STOC} and the topological method of Sipser~\cite{1985Sipser}. In the same paper, Wigderson also provided several concrete open problems, one of which can be stated as follows.

\begin{problem}\label{problem:TCS}
Let $\chi_{i}: \{0,1\}^{n}\rightarrow [k]$, for $i\in [n]$ be a collection of $k$-colorings of the $n$-dimensional hypercube. For a triple of distinct vectors $X,Y,Z\in\{0,1\}^{n}$, say that coordinate $i\in [n]$ is interesting if not all three vectors agrees in this coordinate. Say that $\chi_{i}$ is proper on this triple if the three colors $\chi_{i}(X)$, $\chi_{i}(Y)$ and $\chi_{i}(Z)$ are distinct. Define the collection of colorings good if for every triple of vectors, there is an interesting coordinate $i$ for which $\chi_{i}$ is proper on this triple. The problem is that, bound the smallest number $k$ for which such a good collection exists.   
\end{problem}

Karchmer and Wigderson~\cite{1993Karchmer} later proved that, in the above problem, the smallest number $k$ has to grow with $n$. Motivated by Problem~\ref{problem:TCS}, Alon and Ben-Eliezer~\cite{2011AlonJC} initiated the study of a new problem on extremal graph theory, which aims to find rainbow subgraphs under certain constraints. Formally, for a given graph $H$, let $C(n,H)$ be the minimum number $k$ such that the following holds. There is a set of $n$ colorings $\{f_{v}:E(K_{n})\rightarrow [k]:v\in V(K_{n})\}$, such that for every copy $T$ of $H$ in $K_{n}$, at least one of the colorings that are associated with $V(T)$ assigns distinct colors to all the edges of $E(T)$, that is, at least one vertex can induce a rainbow copy of $H$. In particular, we do not require each coloring to be proper edge coloring.

In recent years, there are also many other important problems in the field of extremal combinatorics involving finding rainbow structures in edge colorings of complete graphs. For example, the rainbow Tur\'{a}n problem~\cite{2022Liu,  2013EUJCDas, Janzer2020, 2017EJCJohnston, 2007CPCRainbow, 2022Tomon, 2022Wang} asks the maximum number of edges in a properly edge-colored complete graph that does not contain certain subgraph, all of whose edges have different colors. The anti-Ramsey problem~\cite{2015Ars, 1975ErdosAnti, 2002JCTBJiang, 2020DMYuan, 2021arxivYuan} asks for the maximum number of colors in an edge coloring of a complete graph with no certain rainbow subgraph. Moreover, there was a famous conjecture of Ringel in 1963, one of whose statements involved finding a rainbow copy of any tree with $n$ edges in a particular proper edge coloring of $K_{2n+1}$. This conjecture was recently confirmed by Montgomery, Pokrovskiy, and Sudakov~\cite{2021GAFASudakov} via many new techniques that are based on probabilistic methods. Return to the new extremal problem of Alon and Ben-Eliezer, one can ask the following natural question.
\begin{question}
For a fixed graph $H$, determine the order of growth of $C(n,H)$ as $n\rightarrow\infty$.
\end{question}

Alon and Ben-Eliezer~\cite{2011AlonJC} characterized the set of all graphs $H$ for which $C(n,H)$ is bounded by some absolute constant $c(H)$. Using the so-called Lov\'{a}sz local lemma~\cite{2016AlonProb, 1975Locallemma}, they proved a general upper bound for any graph $H$. Moreover, they also obtained lower bounds for several graphs of special interests, including paths $P_{t}$, matchings $I_{t}$, and stars $S_{t}$. Here we list some known results in~\cite{2011AlonJC} as follows.

\begin{theorem}[\cite{2011AlonJC}]\label{thm:alon}\
\begin{itemize}
    \item $C(n,H)\leqslant c(H)$ if and only if $H$ contains at most $3$ edges and $H$ is neither $P_{3}$ nor $P_{3}$ together with any number of isolated vertices. Moreover, in all these cases, we have $C(n,H)\leqslant 5$ for every $n$.
    \item Let $H$  be a fixed graph with $r$ vertices, then $C(n,H)=O(r^{4}\cdot n^{\frac{r-2}{r}})$.
    \item $C(n,P_{3})=\Omega((\frac{\log{n}}{\log\log{n}})^{1/4})$ and $C(n,P_{t})=(\log{n})^{\Omega(1)}$ for $t\in\{4,5,6\}$.
    \item $C(n,I_{4})=\Omega(n^{1/6})$ and $C(n,I_{t})=\Omega(n^{1/4})$ for $t\geqslant 5$.
    \item $C(n,S_{4})=\Omega(n^{1/4})$ and $C(n,S_{t})=\Omega(n^{1/3})$ for $t\geqslant 5$.
    \item $C(n,P_{7})$, $C(n,P_{8})=\Omega(n^{1/6})$ and $C(n,P_{t})=\Omega(n^{1/4})$ for $t\geqslant 9$.
    \item For any graph $H$ with at least 13 edges, there is a constant $b=b(H)>0$ so that $C(n,H)=\Omega(n^b)$.
\end{itemize}
\end{theorem}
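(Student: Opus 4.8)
\emph{The general upper bound.} For $H$ on $r$ vertices with $m=e(H)\leqslant\binom r2$ edges, I would take the $n$ colorings $f_v\colon E(K_n)\to[k]$ to be independent and uniformly random. For a fixed copy $T$ of $H$ and a fixed $v\in V(T)$, the probability that $f_v$ is not rainbow on $T$ is at most $\binom m2/k$, so by independence of the $r$ colorings attached to $V(T)$ the bad event $B_T$ (no vertex of $T$ witnesses a rainbow $T$) satisfies $\Pr[B_T]\leqslant(\binom m2/k)^{r}$. Each $B_T$ depends only on the values of the $f_v$, $v\in V(T)$, on $E(T)$, hence is mutually independent of all $B_{T'}$ with $T'$ sharing no edge with $T$, and the number of copies of $H$ meeting $T$ in an edge is $O(mn^{r-2})$. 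The symmetric Lov\'asz Local Lemma then yields a point at which every $B_T$ fails, provided $k$ exceeds a constant multiple of $\binom m2\,(mn^{r-2})^{1/r}=O(r^4n^{(r-2)/r})$, which is the claimed bound.

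\emph{The atomic lower bounds (stars, matchings, paths).} Here I would argue by contradiction: from a good collection with too few colors, extract a copy of $H$ that is \emph{not} locally rainbow, i.e.\ a copy in which every incident vertex $v$ assigns the same $f_v$-color to two of its edges. The model case is $S_t$, with center $u$: fix $u$; as $f_u$ uses $\leqslant k$ colors on the $n-1$ edges at $u$, some color class contains $\geqslant(n-1)/k$ leaves, and restricting the leaves to that class already defeats the center. For each such leaf $w$, the coloring $f_w$ is again a $k$-coloring of the edges at $u$ with a large color class; iterating this pigeonhole/sunflower step leaf by leaf, one finds $t$ leaves for which neither the center nor any leaf is a witness, as soon as a suitable power of $n/k$ beats a constant, forcing $k=\Omega(n^{1/3})$ for $t\geqslant5$ (and $\Omega(n^{1/4})$ for $t=4$, where the count is tighter). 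For $I_t$ one runs the same nested pigeonhole while keeping the $2t$ incident vertices disjoint, which is precisely what drops the exponent to $1/4$ (resp.\ $1/6$ for $t=4$); for $P_t$ one threads the star/matching argument along the path, the cases $t\in\{7,8\}$ and $t\geqslant9$ reflecting how many independent pigeonhole steps the path affords. The boundedness characterization then splits into an explicit copy-independent $5$-coloring for the ``if'' direction and the statement $C(n,P_3)\to\infty$ --- a Karchmer--Wigderson-type triple/quadruple-counting argument, pushed one level further for $P_4,P_5,P_6$ --- for ``only if''.

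\emph{Polynomial growth when $e(H)\geqslant13$.} I would reduce to the atomic cases using two facts. (i) $C(n,H')\leqslant C(n,H)$ whenever $H'$ is a spanning subgraph of $H$, because a copy of $H'$ in $K_n$ extends to a copy of $H$ on the same vertex set and any witness for the latter is a witness for the former. (ii) Deleting isolated vertices costs at most a bounded power: if $H''=H'\cup\{t\text{ isolated vertices}\}$, then a good $k$-coloring collection for $H''$ on $K_n$ yields a good $k^{t+1}$-coloring collection for $H'$ on $K_{n-t}$ --- take the $t$ deleted vertices $u_1,\dots,u_t$ as the isolated ones and replace each surviving $f_v$ by the product coloring $e\mapsto(f_v(e),f_{u_1}(e),\dots,f_{u_t}(e))$, which is rainbow on a copy of $H'$ whenever $f_v$ or some $f_{u_i}$ is --- so $C(n,H)\geqslant C(n-t,H')^{1/(t+1)}$ for every $H'\subseteq H$ with $t=|V(H)|-|V(H')|$. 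It therefore suffices to show that every graph with at least $13$ edges contains one of $S_4$, $I_4$, $P_7$ --- the smallest graphs already known (from the atomic cases) to have polynomial $C$: absence of $S_4$ forces $\Delta(H)\leqslant3$, absence of $I_4$ forces $\nu(H)\leqslant3$, and a short Erd\H{o}s--Gallai-plus-Gallai--Edmonds analysis of graphs with $\Delta\leqslant3$, $\nu\leqslant3$ and no $P_7$ shows that they have at most $12$ edges. Combining the three steps gives $C(n,H)=\Omega(n^{b(H)})$ with $b(H)=b(H')/(t+1)>0$.

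\emph{Main obstacle.} The genuinely hard part is the atomic lower bounds: organizing the nested pigeonhole/sunflower over the vertex-colorings so that all of the linearly many potential witnesses are defeated simultaneously while still retaining polynomially many valid choices, and then tracking the exponents carefully ($1/3$ versus $1/4$ versus $1/6$). By contrast, the reduction in the last part is routine once the extremal graph lemma pinning down the threshold $13$ is established.
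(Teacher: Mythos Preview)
This theorem is not proved in the present paper at all: it is quoted verbatim from Alon and Ben--Eliezer~\cite{2011AlonJC} as background, and the paper's contribution is to \emph{improve} several of these bounds (Theorems~\ref{thm:cliques}--\ref{thm:666}), not to reprove them. Consequently there is no ``paper's own proof'' of Theorem~\ref{thm:alon} to compare your proposal against.

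That said, a few remarks on the proposal itself. Your outline is broadly in the right spirit---the Local Lemma for the upper bound and iterated pigeonhole for the atomic lower bounds are indeed the natural tools, and the present paper's own arguments in Sections~\ref{sec:dense}--\ref{sec:C4} (for the stronger bounds) follow exactly this pattern, which is good circumstantial evidence that you are reconstructing the original proofs correctly. Your reductions (i) and (ii) in the last part are sound; (i) is even stated explicitly in Section~\ref{sec:pre} of the paper, and the product-coloring trick in (ii) is a clean way to absorb isolated vertices at polynomial cost. The one place where your sketch is genuinely incomplete is the extremal claim that every graph with at least $13$ edges contains one of $S_4$, $I_4$, $P_7$: the constraints $\Delta\leqslant3$ and $\nu\leqslant3$ alone only give $e(H)\leqslant18$ via the vertex-cover bound $\tau\leqslant2\nu$, so the exclusion of $P_7$ must do real work, and the ``short Erd\H os--Gallai-plus-Gallai--Edmonds analysis'' you allude to would need to be written out carefully. (Compare Lemma~\ref{lemma:666} of the present paper, which carries out the analogous case analysis for the improved threshold of $6$ edges using a different list $\mathcal H_6$.)
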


In this paper, we show some new lower bounds on $C(n,H)$ for various graphs, including several sparse graphs such as paths, stars, and matchings, and dense graphs such as cliques and complete bipartite graphs.

For complete graphs, Alon and Ben-Eliezer~\cite{2011AlonJC} asked whether for every $\epsilon>0$, there is an $r=r(\epsilon)$ such that $C(n,K_{r})=\Omega(n^{1-\epsilon})$. However, they did not provide any good bound for $C(n,K_{r})$. As we can see that the natural counting argument hits a barrier at $\Omega(n^{1/2})$, here we break this barrier and make partial progress to their conjecture by showing the following result.

\begin{theorem}\label{thm:cliques}
For any positive integer $r\geqslant 8$, we have
\begin{equation*}
    C(n,K_{r})=\Omega(n^{2/3}).
\end{equation*}
\end{theorem}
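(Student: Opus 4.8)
The plan is to assume, for contradiction, that $C(n,K_r)=o(n^{2/3})$ and to fix a valid family $\{f_v\}_{v\in V(K_n)}$ using $k=o(n^{2/3})$ colors; from this I will produce a copy of $K_r$ that no vertex witnesses. Say that a vertex $v$ \emph{blocks} a pair of edges $\{e,e'\}$ with $v\notin e\cup e'$ if $f_v(e)=f_v(e')$; then $v$ cannot witness any copy of $K_r$ whose edge set contains both $e$ and $e'$. First I would count blocks: since $f_v$ uses at most $k$ colors on the $\binom{n-1}{2}$ edges avoiding $v$, convexity gives $\Omega(n^4/k)$ monochromatic pairs per vertex, hence $\Omega(n^5/k)$ blocks in total; as there are only $O(n^4)$ pairs of disjoint edges (monochromatic adjacent pairs would be handled the same way, producing a popular ``cherry'' instead), some pair $\{e_1,e_2\}$ of disjoint edges, on vertices $x_1,y_1,x_2,y_2$, is blocked by every vertex of a set $B$ with $|B|=\Omega(n/k)$.

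The decisive point is what to do next. The naive continuation---looking for a \emph{second} popular pair inside $B$ that is blocked by $x_1,y_1,x_2,y_2$---needs a popular pair to exist in the clique spanned by $B$, i.e. $\binom{|B|}{2}\gtrsim k|B|$, i.e. $|B|\gtrsim k$, which with $|B|=\Omega(n/k)$ only holds for $k=o(n^{1/2})$; this is exactly the natural $\Omega(n^{1/2})$ barrier. Instead I would only ask, for each endpoint $u\in\{x_1,y_1,x_2,y_2\}$, for \emph{some} monochromatic pair of edges of $f_u$ lying inside $B$. Such a pair exists as soon as $\binom{|B|}{2}>k$, and since $|B|=\Omega(n/k)$ this is guaranteed precisely in the regime $k=o(n^{2/3})$---which is where the exponent $2/3$ comes from. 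So for each endpoint $u$ there is a monochromatic pair of $f_u$ spanning a vertex set $T_u\subseteq B$ with $|T_u|\le 4$.

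Finally I would assemble the bad clique: take $K_r$ on $\{x_1,y_1,x_2,y_2\}\cup W$ with $W\subseteq B$, $|W|=r-4\ge 4$, chosen so that each $T_u\subseteq W$. Then every vertex of $W$ is blocked by $\{e_1,e_2\}$, and each of $x_1,y_1,x_2,y_2$ is blocked by a monochromatic pair lying inside $W$, so this $K_r$ has no witness---contradicting validity. The hard part is this last step: blindly requiring $T_{x_1},\ldots,T_{y_2}\subseteq W$ could force $|W|$ up to $16$, whereas only $r-4$ spare vertices are available, so one must choose $W$ economically---using also the edges from $W$ to $\{x_1,y_1,x_2,y_2\}$ (so that a blocking pair for $u$ may be $\{uw,uw'\}$, costing only two vertices of $W$) together with a further pigeonhole inside $B$, again leveraging $|B|\gg\sqrt{k}$. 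Making all four endpoints simultaneously blockable within a budget of $r-4$ spare vertices is the crux of the argument, and is precisely what the hypothesis $r\ge 8$ is there to accommodate.
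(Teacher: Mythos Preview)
Your counting and first pigeonhole are fine and do produce a set $B$ with $|B|=\Omega(n/k)=\Omega(n^{1/3})$; since $\binom{|B|}{2}\gg k$, each of the four endpoints $x_1,y_1,x_2,y_2$ does have a monochromatic pair of edges inside $E(B)$. The gap is in the ``assembly'' step: as you yourself note, the four monochromatic pairs may span up to $16$ vertices, and your proposed remedy---to use cherries $\{uw,uw'\}$ so as to spend only two vertices per endpoint---does not work here. Finding such a cherry requires $|B|>k$, but in the regime $k=\Theta(n^{2/3})$ you only have $|B|=\Theta(n^{1/3})\ll k$, so there is no reason $f_u$ repeats a colour on the edges $\{uw:w\in B\}$. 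The vaguer ``further pigeonhole inside $B$, leveraging $|B|\gg\sqrt{k}$'' does not rescue this either: a back-of-the-envelope count shows the expected number of $4$-subsets $W\subseteq B$ for which all four endpoints see a collision inside $E(W)$ is $o(1)$. If you carry out your plan honestly (using the paper's ``overlap'' trick for the four endpoints) you get five edges in $E(B)$ sharing a common edge, spanning up to $10$ vertices, hence only $C(n,K_r)=\Omega(n^{2/3})$ for $r\ge 14$.

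The paper reaches $r=8$ by changing the first step, not the last. Rather than counting blockers $v\notin e\cup e'$ of a pair $\{e,e'\}$, it counts \emph{self-blocks}: triples $(a,b_1,b_2)$ with $f_a(ab_1)=f_a(ab_2)$. Pigeonholing on the pair $(b_1,b_2)$ yields a set $A'$ of size $\Omega(n^{1/3})$ in which every $a$ is already blocked by its own cherry $\{ab_1,ab_2\}$. Now only the \emph{two} vertices $b_1,b_2$ remain to be blocked, and the overlap trick inside $E(A')$ (again using $|E(A')|\gg k$) produces three edges $e^1,e^2,e^3$ with $f_{b_1}(e^1)=f_{b_1}(e^2)$ and $f_{b_2}(e^1)=f_{b_2}(e^3)$, spanning at most six vertices of $A'$. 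Together with $b_1,b_2$ this gives exactly eight vertices. The economy comes from having two special vertices to handle rather than four, and that in turn comes from letting the popular object be a pair of \emph{vertices} (with the many $a$'s self-blocking) rather than a pair of disjoint \emph{edges} blocked externally.
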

Furthermore, we can also prove a new bound for the complete bipartite graphs as follows.

\begin{theorem}
For any positive integers $t\geqslant s\geqslant 7$, we have
\begin{equation*}
    C(n,K_{s,t})=\Omega(n^{2/3}).
\end{equation*}
\end{theorem}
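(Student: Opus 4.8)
The plan is to imitate the proof of Theorem~\ref{thm:cliques}; there is no black-box reduction to it, since $K_{s,t}$ is bipartite and contains no clique of size $\geq 3$, so the same scheme has to be rerun on the bipartite side. Fix $t\geq s\geq 7$ and suppose for contradiction that there are $n$ colorings $\{f_v:E(K_n)\to[k]\}_{v\in V(K_n)}$ with $k=o(n^{2/3})$ such that every copy of $K_{s,t}$ in $K_n$ has a witness, i.e.\ a vertex whose coloring is rainbow on that copy; the goal is to deduce $k=\Omega(n^{2/3})$.

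The first step is a witness-deletion reduction. To each copy $T$ of $K_{s,t}$ assign a witness $w(T)\in V(T)$; removing $w(T)$ leaves a copy $T'$ of $K_{s-1,t}$ or of $K_{s,t-1}$, rainbow in the single coloring $f_{w(T)}$, and $T$ is recovered from $(w(T),T')$ since after the deletion the two parts of $T'$ have different sizes (when $s-1=t$ one also records which part $w(T)$ came from). Summing over the $\Theta_{s,t}(n^{s+t})$ copies of $K_{s,t}$ and pigeonholing over $w(T)$ and the two possible shapes of $T'$ produces a vertex $v$ for which the single coloring $f:=f_v$ is rainbow on a positive proportion of all copies of (say) $K_{s-1,t}$ avoiding $v$. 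The hypothesis $s,t\geq7$ enters here exactly as $r\geq8$ does for cliques: after deleting one vertex, $K_{s,t}$ still has both parts of size $\geq6$, the bipartite counterpart of $K_8$ minus a vertex being $K_7$ of minimum degree $6$, and this minimum degree is what the next step consumes.

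The decisive step is the counting/structural lemma behind Theorem~\ref{thm:cliques}, adapted to complete bipartite host graphs: it turns the abundance of rainbow copies into the bound $k=\Omega(n^{2/3})$. Roughly, the $\geq 42$ edges of $K_{s-1,t}$ split into many internally edge-disjoint gadgets (short alternating paths and small complete bipartite pieces, available precisely because each part has $\geq6$ vertices), and one argues that for $k=o(n^{2/3})$ no coloring can keep all these gadgets rainbow on enough copies to meet the proportion from the first step, which is the point where the $n^{1/2}$ limit of the naive count is overtaken and $n^{2/3}$ is reached. I expect this lemma to be the real obstacle, for two reasons. First, the naive version of the first step is lossy by a constant factor — a single generic coloring is already rainbow on almost all copies of $K_{s-1,t}$ through a vertex — so the lemma cannot merely bound rainbow copies in one coloring; it must exploit, as in the clique argument, that the whole family of $n$ colorings simultaneously witnesses all copies, for instance by pigeonholing over more local data than just the witness vertex. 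Second, the two parts of $K_{s,t}$ are not interchangeable, so the gadget estimate has to be run for both $K_{s-1,t}$ and $K_{s,t-1}$ and must genuinely use that \emph{each} part has $\geq6$ vertices after a deletion — exactly the content of $s,t\geq7$ — in order to supply enough gadgets to reach the exponent $2/3$. The automorphism bookkeeping for $K_{s,t}$ (the factor $s!\,t!$, doubled when $s=t$) and the borderline case $s=t$ are routine once the lemma is in place.
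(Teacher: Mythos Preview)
Your proposal is not a proof but a plan, and you yourself identify its central gap: after the witness-deletion step you are left needing that a \emph{single} coloring cannot be rainbow on a positive fraction of copies of $K_{s-1,t}$, yet a generic coloring \emph{is} rainbow on almost all such copies. You then hope for an unstated ``counting/structural lemma'' that somehow recovers the use of all $n$ colorings; this is the entire content of the theorem, not a technicality to be filled in, and nothing in your outline indicates how it would go or why the exponent $2/3$ would emerge.

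The paper's argument is entirely different and fully constructive; there is no witness-deletion and no counting of rainbow copies. One partitions $V(K_{3n})=V_L\cup V_R$ and, by the convexity lemma (Lemma~\ref{lem:KST}), finds a pair of edges $e_1=v_1v_2$, $e_2=u_1u_2$ in $V_L$ together with a set $V_R'\subseteq V_R$ of size $\Omega(n/k^2)$ such that every $b\in V_R'$ has $f_b(e_1)=f_b(e_2)$. All vertices of $V_R'$ are thus already ``killed''. One now works inside the clique on $V_R'$, which has $\Omega(n^2/k^4)$ edges; for each of the four vertices $v_1,v_2,u_1,u_2$ one greedily covers $99\%$ of $E(V_R')$ by monochromatic blocks (of sizes $2,7,16,29$), which is possible precisely when $|E(V_R')|\gg k$, i.e.\ $k=o(n^{2/3})$. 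A common edge $e^1$ in all four covers, together with partners $e^2,\dots,e^5$, kills $v_1,v_2,u_1,u_2$; the block sizes $7,16,29$ exceed $\binom{4}{2},\binom{6}{2},\binom{8}{2}$, so the partners can be chosen to avoid odd cycles and the resulting $14$ vertices support a $K_{7,7}$ with no rainbow witness. The hypothesis $s,t\geq 7$ is used only at the very end, to accommodate the $4+10$ vertices in a bipartite frame---not, as you guessed, to guarantee minimum degree $\geq 6$ after a deletion.
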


Our improved lower bounds for sparse graphs can be listed as follows.

\begin{theorem}\label{thm:paths}
Let $P_{t}$ be the path of length $t$, we have\
\begin{itemize}
    \item $C(n,P_{4})=\Omega(n^{1/5})$.
    \item $C(n,P_{t})=\Omega(n^{1/3})$, for $t\in\{5,6,7\}$.
    \item $C(n,P_{t})=\Omega(n^{1/2})$, for $t\geqslant 8$.
\end{itemize}
\end{theorem}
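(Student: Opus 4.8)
\medskip
\noindent\emph{Proof strategy.}
For all three cases the outline is the same: assuming a good family of $n$ colourings $\{f_v:v\in V(K_n)\}$ with $k:=C(n,P_t)$ colours exists, I would construct an explicit copy $T=v_0v_1\cdots v_t$ of $P_t$ that is \emph{bad}, meaning that every $x\in V(T)$ has $f_x(e)=f_x(e')$ for two distinct edges $e,e'$ of $T$; this forces $n$ to be at most a constant times the relevant power of $k$, which is precisely the claimed bound. The workhorse is iterated convexity: for any vertices $v,w$, the restriction of $f_w$ to the star at $v$ uses at most $k$ colours, so a $\gtrsim 1/k$ fraction of ordered pairs of edges at $v$ receive equal $f_w$-colour, and averaging such estimates over $w$ and over the centre $v$ yields large ``agreement sets''.

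Concretely, for a cherry $x\text{--}y\text{--}z$ (a path with two edges) put $M(x,y,z)=\{w:f_w(xy)=f_w(yz)\}$. If $T$ contains the consecutive edges $xy$ and $yz$, then every $w\in M(x,y,z)$ is automatically bad for $T$. A double count over the centre $y$, over the pair of neighbours $\{x,z\}$, and over $w$ produces a cherry with $y\in M(x,y,z)$ and $|M(x,y,z)|\gtrsim n/k^2$, and a further factor of $k$ lets one prescribe a bounded number of extra vertices to lie in $M(x,y,z)$. Taking this cherry as the centre of $T$ kills $f_y$, and since $M(x,y,z)$ is large it lets us place both endpoints $v_0,v_t$ — and, for long paths, also the deep internal vertices, handled by the analogous demand $v_i\in M(v_{i-1},v_i,v_{i+1})$ while threading the path — into agreement sets at only $k^{O(1)}$ total cost, because at each extension step a constant fraction of the candidate continuations lie in a large colour class of the relevant star.

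The genuinely expensive vertices are the two internal vertices $v_1,v_{t-1}$ that are adjacent to an endpoint: their path-edges $v_0v_1,v_1v_2$ (resp.\ $v_{t-2}v_{t-1},v_{t-1}v_t$) are not governed by the middle cherry. For $v_1$ one forces $f_{v_1}(v_0v_1)=f_{v_1}(v_1v_2)$; but $f_{v_1}(v_1v_2)$ is already determined once $v_1,v_2$ are chosen, so this means selecting $v_0$ from a single colour class of $f_{v_1}$'s star \emph{and} keeping $v_0$ inside an agreement set, i.e.\ intersecting two sets of density $\Theta(1/k)$, which costs a factor of $k$. The accumulation of these losses fixes the exponent: for $P_4$ essentially all five vertices are ``expensive'' and one gets $n=O(k^5)$; for $t\in\{5,6,7\}$ the additional length provides an independent agreement set that decouples the two near-endpoint constraints, leaving $n=O(k^3)$; and for $t\ge 8$ the path is long enough that the two endpoint colourings can instead be killed by repeats among \emph{disjoint} pairs of far edges, each prescribable at cost $k$, reaching the natural barrier $n=O(k^2)$.

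The crux, and the source of all the care, is that pigeonhole only controls $f_w$ on edges incident to $w$; elsewhere $f_w$ is adversarial, so any vertex whose failure cannot be localised to a single cherry of $T$ costs an honest factor of $k$, and these factors interact through the endpoints shared by the near-endpoint cherries. Making this bookkeeping tight for each value of $t$ — in particular squeezing $P_4$ down to $n^{1/5}$ and verifying that long paths yield nothing past $n^{1/2}$ — is the main work; the short path $P_4$ is the essential new contribution, the longer cases being progressively roomier instances of the same scheme.
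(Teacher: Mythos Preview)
Your sketch lands on the right exponents, but the mechanism you describe for reaching them has a genuine gap. The problem is the step where you write ``selecting $v_0$ from a single colour class of $f_{v_1}$'s star \ldots\ intersecting two sets of density $\Theta(1/k)$''. Once the middle cherry $v_1v_2v_3$ has been fixed, the colour $f_{v_1}(v_1v_2)$ is a \emph{specific} value, and the set $\{u:f_{v_1}(v_1u)=f_{v_1}(v_1v_2)\}$ can perfectly well have size~$1$; pigeonhole gives no lower bound on a \emph{prescribed} colour class, only on the largest one. So the ``costs a factor of $k$'' accounting for the near-endpoint vertices $v_1$ and $v_{t-1}$ is unjustified, and with it the whole bookkeeping for $P_4$ collapses. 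Building the path inside-out from the middle cherry forces you to hit predetermined colours at $v_1$ and $v_3$, and the adversary can make those colour classes singletons.

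The paper sidesteps this by reversing the order of construction. For $P_4$ it first fixes the three vertices that will become $v_0,v_2,v_4$, not the middle cherry: it counts quadruples $(a,b_1,b_2,b_3)$ with $f_a(ab_1)=f_a(ab_2)=f_a(ab_3)$ --- a count involving only the single colouring $f_a$, so convexity applies cleanly --- and pigeonholes to a triple $(b_1,b_2,b_3)$ for which the set $A'$ of such $a$'s has size $\gtrsim n/k^2$. Now any $a\in A'$ placed at $v_1$ or $v_3$ automatically satisfies its own cherry constraint, because that constraint was baked into the definition of $A'$ rather than imposed after the fact. A final $k^3$-colouring on $A'$ picks $a_1,a_2$ so that $f_{b_1},f_{b_2},f_{b_3}$ all repeat on the middle cherry $a_1b_2a_2$. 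For $t\geqslant 5$ the paper does not thread the path at all: it uses the observation that a lower bound for any spanning subgraph transfers, and proves $C(n,P_2\sqcup P_2)=\Omega(n^{1/3})$ and $C(n,P_2\sqcup K_2\sqcup K_2\sqcup K_2)=\Omega(n^{1/2})$ directly. Two disjoint cherries (respectively one cherry plus three disjoint edges) decouple exactly the constraints that your inside-out extension entangles.
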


Theorem~\ref{thm:paths} resolves the problem for almost all paths, leaving only the case of $P_{3}$ open. 

\begin{theorem}\label{thm:stars}
Let $S_{t}$ be the star with $t$ leaves, we have\
\begin{itemize}
    \item $C(n,S_{4})=\Omega(n^{1/3})$.
    \item $C(n,S_{t})=\Omega(n^{1/2})$, for $t\geqslant 5$.
\end{itemize}
\end{theorem}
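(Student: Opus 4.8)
The plan is to prove the contrapositive. Suppose we are given a valid system of $n$ colorings $f_v\colon E(K_n)\to[k]$; I will produce a copy $T$ of $S_t$ on which \emph{no} coloring $f_v$ with $v\in V(T)$ is injective — call it a \emph{bad star} — whenever $k$ is below the stated bound, and the existence of such a $T$ contradicts $C(n,S_t)\le k$. Throughout we may assume $k\le\sqrt{(n-1)/2}$, as otherwise $k=\Omega(\sqrt n)$ and both targets already hold. Fix an arbitrary vertex $c$ (the prospective center) and let $A\subseteq V(K_n)\setminus\{c\}$ be a largest color class of $f_c$ among the $n-1$ edges at $c$, so $|A|\ge(n-1)/k\ge 2k$ and all edges $cx$ with $x\in A$ have one common $f_c$-color; hence $f_c$ is non-injective on the star at $c$ with any leaf set meeting $A$ in at least two vertices. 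For a pair $\{p,q\}\subseteq A$ put
\[
 W_{p,q}:=\{v\in V(K_n):f_v(cp)=f_v(cq)\},
\]
the set of vertices whose coloring fails to separate $cp$ from $cq$; note $c\in W_{p,q}$, and that for every $v\in W_{p,q}$ the coloring $f_v$ is non-injective on any star at $c$ containing both $p$ and $q$ as leaves.

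The crux is to locate a pair $\{p,q\}\subseteq A$ with $|W_{p,q}|\ge n/(2k)$. By double counting,
\[
 \sum_{\{p,q\}\subseteq A}|W_{p,q}|=\sum_{v\in V(K_n)}\#\{\{p,q\}\subseteq A:f_v(cp)=f_v(cq)\}\ \ge\ n\cdot\frac{|A|(|A|-k)}{2k},
\]
since for each $v$ the map $x\mapsto f_v(cx)$ partitions $A$ into at most $k$ classes, and the number of monochromatic pairs is at least $\tfrac{|A|(|A|-k)}{2k}$ by convexity. Dividing by $\binom{|A|}{2}$ the factor $|A|^2$ cancels, and $|A|\ge 2k$ gives the claim. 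The key point is that the witnesses $v$ range over all $n$ vertices while the candidate pairs live only inside $A$, which is exactly why this beats the trivial estimate $|A|/k$.

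For $t\ge 5$, fix such a pair $\{p,q\}$ and set $W':=W_{p,q}\setminus\{p,q,c\}$, so $|W'|\ge n/(2k)-3$. Inside $W'$ discard the at most $k$ vertices that form singleton classes of $x\mapsto f_p(cx)$ and the at most $2k$ vertices lying in classes of size $\le 2$ of $x\mapsto f_q(cx)$; if $|W'|>3k$ a vertex $\ell_4$ survives, and it has a partner $\ell_3\in W'$ with $f_p(c\ell_3)=f_p(c\ell_4)$ and a partner $\ell_5\in W'\setminus\{\ell_3,\ell_4\}$ with $f_q(c\ell_4)=f_q(c\ell_5)$. Adjoining arbitrary distinct $\ell_6,\dots,\ell_t\in W_{p,q}$, the star $T$ with center $c$ and leaves $p,q,\ell_3,\dots,\ell_t$ is bad: $f_c$ is killed by $\{cp,cq\}$; each $f_{\ell_i}$ with $i\ge 3$ by $\{cp,cq\}$ (as $\ell_i\in W_{p,q}$); $f_p$ by $\{c\ell_3,c\ell_4\}$; and $f_q$ by $\{c\ell_4,c\ell_5\}$. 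The inequality $|W'|>3k$ holds once $k\le c\sqrt n$, giving $C(n,S_t)=\Omega(\sqrt n)$. For $t=4$ only two free leaves are available, so instead I would apply pigeonhole to the map $v\mapsto(f_p(cv),f_q(cv))\in[k]^2$ on $W'$: once $|W'|>k^2$ there are distinct $\ell_3,\ell_4\in W'$ with $f_p(c\ell_3)=f_p(c\ell_4)$ \emph{and} $f_q(c\ell_3)=f_q(c\ell_4)$, and then the star with center $c$ and leaves $p,q,\ell_3,\ell_4$ is bad exactly as above. Since $|W'|\ge n/(2k)-3$, the bound $|W'|>k^2$ holds for $k\le c\,n^{1/3}$, giving $C(n,S_4)=\Omega(n^{1/3})$.

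I expect the main obstacle to be the double-counting step producing a pair $\{p,q\}\subseteq A$ with $|W_{p,q}|=\Omega(n/k)$; once that is in hand, the remaining steps are routine pigeonhole, the only fuss being the distinctness of $\ell_3,\ell_4,\ell_5$ in the case $t\ge5$. It is also worth checking that the scheme genuinely fails for $S_3$ (consistent with $C(n,S_3)=O(1)$): with a single free leaf one would be forced to hit prescribed color values rather than merely to find a collision, and that cannot be guaranteed.
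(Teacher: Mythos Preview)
Your proposal is correct and follows essentially the same architecture as the paper: pick a center $c$, pass to a largest $f_c$-color class $A$ among the edges at $c$, double count over all vertices to locate a pair $\{p,q\}\subseteq A$ whose witness set $W_{p,q}$ has size $\Omega(n/k)$, and then select the remaining leaves from $W_{p,q}$ by pigeonhole against $f_p$ and $f_q$. The paper packages the double-counting step into its Lemma~\ref{lem:star} (counting over $P=V\setminus(S\cup\{x\})$ rather than all of $V$), but this is a cosmetic difference.

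The only place the two arguments diverge in style is the leaf selection for $t\ge 5$. The paper recursively extracts disjoint monochromatic triples in $A$ under $f_s$ covering a large fraction of $A$, does the same under $f_{s'}$, and then uses a density overlap to find two triples sharing a common element $p_t$, producing $\{p_t,p_j,p_k,p_h,p_m\}$. Your ``discard small color classes'' trick (throw away the $\le k$ singletons of $f_p$ and the $\le 2k$ members of classes of size $\le 2$ under $f_q$, so any survivor $\ell_4$ automatically has partners $\ell_3$ and $\ell_5$ with $\ell_5\notin\{\ell_3,\ell_4\}$) achieves the same thing more directly and with no iteration. Both routes give the identical threshold $|W'|>3k$, hence $k=\Omega(\sqrt n)$. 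For $t=4$ your product-pigeonhole on $(f_p,f_q)\in[k]^2$ is exactly the paper's two successive pigeonholes on $f_s$ then $f_{s'}$, written in one line.
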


\begin{theorem}\label{thm:matchings}
Let $I_{t}$ be the matching of size $t$, we have\
\begin{itemize}
    \item $C(n,I_{4})=\Omega(n^{1/5})$.
    \item $C(n,I_{t})=\Omega(n^{1/3})$, for $t\in\{5,6\}$.
    \item $C(n,I_{t})=\Omega(n^{1/2})$, for $t\geqslant 7$.
\end{itemize}
\end{theorem}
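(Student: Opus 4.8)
The plan is the usual one for lower bounds of this type: assuming $k:=C(n,I_t)$ is below the stated value, I will construct a \emph{bad} copy $T$ of $I_t$ — one for which $f_v$ repeats a colour on $E(T)$ for \emph{every} $v\in V(T)$ — contradicting $C(n,I_t)\le k$. Everything rests on one per-vertex estimate: since $f_v$ uses only $k$ colours, by convexity the number of pairs of disjoint edges of $K_n$ that are monochromatic under $f_v$ is at least $(1-o(1))\tfrac{n^4}{8k}$; more generally, for a finite set $W$ of vertices the ``combined colouring'' $e\mapsto(f_w(e))_{w\in W}$ uses at most $k^{|W|}$ colours, so inside any vertex set $U$ of size $m$ there is a matching of size $\Omega\!\big(m/k^{|W|}\big)$ all of whose edges get the same combined colour (a combined colour class on $U$ has $\ge\binom m2/k^{|W|}$ edges, hence contains a matching of that size).

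The reduction I will use is ``core plus extension''. Summing the convexity estimate over all $n$ vertices and dividing by the number of disjoint edge-pairs shows there is a disjoint pair $e_1=x_1x_2,\ e_2=x_3x_4$ whose \emph{killer set} $A:=\{v:f_v(e_1)=f_v(e_2)\}$ has size $\Omega(n/k)$. Every $v\in A$ already sees the repeated colour $f_v(e_1)=f_v(e_2)$; hence, once the four endpoints $x_1,\dots,x_4$ are also ``covered'' (each sees a repeat among $E(T)$), we finish by letting $e_3,\dots,e_t$ be \emph{any} $t-2$ disjoint edges with all endpoints inside $A$ (possible since $|A|\to\infty$): all $2t$ vertices of $T=\{e_1,\dots,e_t\}$ then lie in $A$ or are among $x_1,\dots,x_4$, so $T$ is bad. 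Thus for every $t$ the whole problem reduces to covering the $O(1)$ vertices $x_1,\dots,x_4$, and the three regimes differ only in how much the spare edges let us spend on this.

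For $t=4$ there is no slack beyond the single pair $\{e_3,e_4\}$ we may place in $A$, so all of $x_1,\dots,x_4$ must be covered at once by that pair: inside $K[A]$ find two disjoint edges of equal combined colour under $(f_{x_1},f_{x_2},f_{x_3},f_{x_4})$ and call them $e_3,e_4$; then $f_{x_i}(e_3)=f_{x_i}(e_4)$ covers each $x_i$, while $e_3,e_4$ have their endpoints in $A$. This works once $|A|\gtrsim k^4$, i.e. $n/k\gtrsim k^4$, i.e. $k=O(n^{1/5})$, giving $C(n,I_4)=\Omega(n^{1/5})$ (a similar mechanism underlies the $P_4$ bound). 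For $t\in\{5,6\}$ the extra edge or two let us split $\{x_1,\dots,x_4\}$ into parts of size at most two and cover each part by a pair found through a combined colouring of only \emph{two} of the $f_{x_i}$'s, which only needs $|A|\gtrsim k^{2}$, i.e. $k=O(n^{1/3})$. For $t\ge7$ there is enough room to cover every endpoint through a relation involving a single colouring at a time — morally, the edges of $T$ are linked into a partnering structure (a chain or cycle) in which the endpoints of $e_i$ are covered by a neighbour of $e_i$, and the colour classes used at the successive links are fixed large in advance so that the $k$-factors do not compound — bringing the requirement down to $|A|\gtrsim k$, i.e. $k=O(n^{1/2})$, matching the $\sqrt n$ barrier; this is the same phenomenon giving $\Omega(n^{1/2})$ for $P_t$ with $t\ge8$, and the gap ($t\ge7$ versus $t\ge8$) is exactly the extra free vertex a disjoint edge carries over a path edge.

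The hard part is the coupling between the object one pigeonholes over and the vertices that must be controlled: a disjoint pair with many killers need not have its own endpoints among them, and if one covers the endpoints by greedily choosing the auxiliary edges one at a time, each choice forces the next into a colour class a factor of $k$ smaller, and these factors compound — the naive version only reproduces the weak $n^{1/5}$ bound for every $t$. Avoiding the compounding when $t\ge7$ requires selecting the whole partnering structure essentially at once, out of combined colour classes pinned to be large beforehand (an absorption-type argument), and verifying enough room survives; making this robust, and optimising the split of $x_1,\dots,x_4$ against the number of spare edges for $t\in\{5,6\}$ and $t\ge7$, is the technical core. I expect the final write-up to factor out a single lemma shared with the proof of Theorem~\ref{thm:paths}.
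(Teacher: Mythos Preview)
Your proposal is correct and follows essentially the same route as the paper. The ``core plus extension'' reduction (find a disjoint pair $e_1,e_2$ with killer set $A$ of size $\Omega(n/k)$, then place the remaining $t-2$ edges inside $A$ so as to cover the four endpoints $x_1,\dots,x_4$) is exactly the paper's scheme; your $t=4$ argument via the combined $k^4$-colouring is identical to theirs, and your split into pairs for $t\in\{5,6\}$ and into singletons for $t\ge7$ matches the paper's use of $k^2$- and $k$-colourings respectively.

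The one point you leave as the ``technical core'' is made concrete in the paper by a simple device you might want to adopt: rather than an abstract absorption argument, for each relevant (combined) colouring they greedily extract pairwise-disjoint monochromatic $r$-tuples of edges from the matching inside $A$ until $99\%$ of it is covered (possible since $\tfrac{1}{100}|H|\gg rk^s$); doing this separately for each of the required colourings, the resulting families overlap on most edges, so some edge $h_t$ lies in an $r$-tuple of every family, and the partners of $h_t$ in the various tuples supply the remaining edges. This yields a \emph{star} partnering structure (all auxiliary pairs share the central edge $h_t$) rather than the chain or cycle you describe, which is slightly cleaner since it avoids any sequential dependence between the links and makes the distinctness of the chosen partners immediate for $t\ge7$ by taking $r=5$.
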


The next result shows that if $H$ has at least $6$ edges, then $C(n,H)$ must be polynomial in $n$. 

\begin{theorem}\label{thm:666}
For any graph $H$ with at least $6$ edges, there is a constant $b=b(H)>0$ so that $C(n,H)=\Omega(n^b)$.
\end{theorem}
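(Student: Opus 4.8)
The plan is to mimic the strategy behind Theorem~\ref{thm:alon}'s last bullet (the $13$-edge case), but to squeeze the analysis down to $6$ edges by a more careful counting argument combined with a structural case analysis on $H$. Throughout, fix $H$ with $m := |E(H)| \geqslant 6$ and suppose we are given $n$ colorings $\{f_v : E(K_n)\to[k]\}$, with $k = C(n,H)$, witnessing the required rainbow property; the goal is to derive $k = \Omega(n^b)$ for some $b=b(H)>0$. The overall idea is a \emph{probabilistic deletion / double counting} argument: pick a copy $T$ of $H$ uniformly (or near-uniformly) among all $\Theta(n^{|V(H)|})$ labelled copies in $K_n$; the rainbow property guarantees some $v \in V(T)$ for which $f_v$ is rainbow on $E(T)$, so $T$ is counted among the ``rainbow-witnessed'' copies for at least one of the $n$ colorings. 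Hence one of the colorings $f_v$ must be rainbow on at least $\Omega(n^{|V(H)|-1})$ copies of $H$. The crux is then: \emph{a single coloring $f\colon E(K_n)\to[k]$ cannot be rainbow on too many copies of $H$ unless $k$ is polynomially large.} Quantifying this upper bound on the number of $f$-rainbow copies of $H$, and comparing, yields the bound on $k$.

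The key steps, in order, are as follows. (1) Reduce to a connected ``core'': if $H$ has a component with at most $3$ edges, that component contributes only a bounded amount to the rainbow constraint (by the first bullet of Theorem~\ref{thm:alon}), so one argues that the polynomial growth is forced by a subgraph $H'\subseteq H$ that is either connected with $\geqslant 4$ edges, or a union of small components summing to $\geqslant 6$ edges; in the latter case the matching/forest lower bounds of Theorems~\ref{thm:paths}--\ref{thm:matchings} already give what we want, so the remaining work is for $H$ \emph{connected} with between $4$ and, say, $12$ edges (above $12$ it is Theorem~\ref{thm:alon}). (2) For such connected $H$, set up the counting: let $N$ be the number of $f$-rainbow labelled copies of $H$ in $K_n$ for the extremal coloring $f$ from the averaging step; we have $N = \Omega(n^{v-1})$ where $v=|V(H)|$. (3) Bound $N$ from above: build a copy of $H$ edge by edge along a spanning connected order of $E(H)$ (each new edge shares a vertex with the already-built part), and observe that \emph{rainbowness} forces each new edge's color to differ from all previously chosen colors on $T$; a careful bookkeeping — using that at each step the number of extensions is at most $n$ but that ``most'' of these must avoid a growing forbidden set of colors if $k$ is small — produces $N = O(n^{v-1}/k^{c})$ for an explicit $c = c(H) > 0$ (this is where the $\geqslant 6$ or $\geqslant 4$-edge hypothesis enters, ensuring $c>0$; with fewer edges, rainbowness imposes no net constraint, consistent with $C(n,H)$ being bounded). (4) Combine (2) and (3) to get $k^{c} = O(1)\cdot n^{\text{(something)}} \cdot n^{-0} $ — more precisely $\Omega(n^{v-1}) \le N \le O(n^{v-1}) k^{-c}$ is the wrong direction, so instead one must keep the polynomial gain explicit: the correct form is $N = O(n^{v-1-\delta} k^{c'})$ for suitable $\delta, c' > 0$, giving $k \ge \Omega(n^{\delta/c'})$, hence $b = \delta/c'$.

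I expect the main obstacle to be Step~(3): obtaining an upper bound on the number of $f$-rainbow copies of a \emph{fixed} connected graph $H$ with only $\geqslant 4$ edges that is genuinely better than the trivial $O(n^{v-1})$ by a \emph{polynomial} factor in $k$. The subtlety is that a single coloring is very flexible — with $k$ colors one can make $\Theta(n^{v-1})$ copies rainbow quite easily unless the structure of $H$ forces overlapping color constraints (e.g.\ two edges sharing a vertex, or the presence of a short cycle). So the argument must exploit the specific combinatorial structure of $H$: one picks a well-chosen ``constraint gadget'' inside $H$ (a path $P_4$, a star $S_4$, a triangle plus pendant edges, a $K_{s,t}$-subconfiguration, etc.), applies the already-proven polynomial lower bounds for those gadgets (Theorems~\ref{thm:paths}, \ref{thm:stars}, \ref{thm:matchings}, \ref{thm:cliques}) as a black box via a projection/restriction argument, and then shows that a rainbow copy of $H$ yields a rainbow copy of the gadget, transferring the bound. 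The delicate point is handling the boundary cases — connected graphs with exactly $4$ or $5$ edges that contain no $P_4$, no $S_4$, and no triangle as a ``useful'' sub-gadget — where one must verify by a short finite case check that every such $H$ with $\geqslant 6$ \emph{total} edges (so necessarily with multiple components or with $\geqslant 6$ edges in the connected piece) still admits some gadget to which the polynomial bounds apply; pinning this down cleanly is the technical heart of the proof.
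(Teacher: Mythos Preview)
Your central counting scheme in Steps~(2)--(4) does not work, and you in fact spot the problem yourself: the inequality runs the wrong way. Averaging over copies gives a \emph{lower} bound $N=\Omega(n^{v-1})$ on the number of $f_v$-rainbow copies for some vertex $v$, but Step~(3) as you describe it can only produce the trivial \emph{upper} bound $N=O(n^{v-1})$. Building $H$ edge by edge and demanding each new colour be fresh does not cost a polynomial factor: if $f$ happens to be a proper edge colouring (which is possible already for $k\sim n$), then \emph{every} copy of $H$ is rainbow, so $N=\Theta(n^{v-1})$ exactly. There is no bound of the form $N=O(n^{v-1-\delta}k^{c'})$ with $\delta>0$ valid for all $k$-colourings; hence comparing the two sides yields nothing. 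The entire probabilistic-deletion framework should be discarded.

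What actually proves the theorem is the idea you relegate to the final paragraph --- and it is far simpler than you suggest. The paper uses only the monotonicity principle: if $H'$ is a subgraph of $H$ on the \emph{same} vertex set, then $C(n,H)\geqslant C(n,H')$ (a non-rainbow copy of $H'$ is automatically a non-rainbow copy of $H$). The whole proof then reduces to a short structural lemma: every graph with at least $6$ edges contains, as a spanning subgraph (after padding with isolated vertices), at least one member of the finite list
\[
\mathcal{H}_{6}=\{C_{4},\;P_{4},\;I_{4},\;S_{4},\;P_{2}\sqcup P_{2},\;P_{2}\sqcup K_{2}\sqcup K_{2}\}.
\]
This is a half-page case analysis on the number of connected components. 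Polynomial lower bounds for $P_{4}$, $I_{4}$, $S_{4}$ come from Theorems~\ref{thm:paths}--\ref{thm:matchings}; the bounds for $P_{2}\sqcup P_{2}$ and $P_{2}\sqcup K_{2}\sqcup K_{2}$ are obtained as byproducts of the $S_4$ argument; and the only genuinely new ingredient is $C(n,C_{4})=\Omega(n^{1/3})$, proved by the same ``pick a popular pair, then pigeonhole on the product colouring $(f_{b_1},f_{b_2})$'' trick used for $P_4$. There is no need for your Step~(1) reduction to connected cores, no need to handle $4$--$12$ edges separately, and no appeal to the $13$-edge result at all.
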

This improves the result of Alon and Ben-Eliezer from $13$ edges to $6$ edges. Note that the first result in Theorem~\ref{thm:alon} tells that the constant cannot be improved to $3$, thus our result is very close to being optimal.

\medskip

\noindent\textbf{Notation.} Throughout this paper, we use $f$ to denote the set of coloring functions associated with the corresponding vertices. We will denote $H_{1}\sqcup H_{2}$ as the vertex-disjoint union of the graphs $H_{1}$ and $H_{2}$. In particular, we will write $H\sqcup\{p\}$ for the graph which consists of $H$ plus an isolated vertex $p$. We always assume $n$ is a sufficiently large number. The notations $O(\cdot)$, $\Omega(\cdot)$ and $o(\cdot)$ have their usual asymptotic meaning. We omit the flooring and 
ceiling functions if not essentially. We may abuse some letters or mathematical symbols, and in each section, the meaning of each letter and the mathematical symbol will be clear.

\medskip
\noindent\textbf{Structure of the paper.}
The rest of this paper is organized as follows. We will present some auxiliary lemmas in Section~\ref{sec:pre}. The proofs of new results for cliques and complete bipartite graphs are presented in Section~\ref{sec:dense}. We prove the lower bounds for paths, stars and matchings separately in Section~\ref{sec:Sparse}. We will show the polynomial lower bound for any graph with at least $6$ edges in Section~\ref{sec:C4}. Finally, we conclude this paper and pose some open problems in Section~\ref{sec:conclusions}.

\section{Preliminaries}\label{sec:pre}
To show $C(n,H)> k$, our main task is to show that for any set of $n$ coloring functions with $k$ colors, we can always find a copy of $H$ such that none of its vertices induces a rainbow coloring. Moreover, one can see that if $H'\subseteq H$ is a subgraph of $H$ on the same set of vertices, then every lower bound for $C(n,H')$ implies the same lower bound for $C(n,H)$. The property will help us show the improved bounds for paths of length at least $5$, see Remarks~\ref{remark:p2p2} and~\ref{rmk:pathlonger}.

 Here we present the following simple lemmas, since the proofs of these lemmas are similar, for simplicity, we only give the proof of Lemma~\ref{lem:star} in detail. By the first result in Theorem~\ref{thm:alon}, we only consider the cases that $n$ is a sufficiently large number and $k\ge 6$ is an integer.

\begin{lemma}\label{lem:star}
For any set of $n$ $k$-colorings of $K_{n}$ associated with $n$ vertices, there exists a vertex $x\in V(K_{n})$, a set $S$ with $|S|\geqslant\frac{n-1}{k}$ and $f_{x}(xs_{1})=f_{x}(xs_{2})$ for all distinct $s_{1},s_{2}\in S$, and a set $P=V(K_{n})\setminus(S\cup\{x\})$, such that the number of triples $(s,s',p)\in S\times S\times P$ with $f_{p}(xs)=f_{p}(xs')$ is at least $\frac{n^{3}}{24k^{3}}$.
\end{lemma}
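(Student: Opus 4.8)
The plan is to build the desired structure in two stages, each time applying a pigeonhole/averaging argument. First I would locate the vertex $x$ together with its large monochromatic star. For each vertex $v \in V(K_n)$, look at its own coloring $f_v$ restricted to the $n-1$ edges $vu$, $u \neq v$; by pigeonhole there is a color class of size at least $(n-1)/k$ among these edges, giving a set $S_v$ with $|S_v| \geqslant (n-1)/k$ and $f_v(vs) $ constant over $s \in S_v$. Pick any such $v$, call it $x$, set $S = S_x$ and $P = V(K_n) \setminus (S \cup \{x\})$; this already secures the first two assertions. Note $|P| = n - 1 - |S|$, which we only need to be at least something like $n/2$ in the regime of interest; since $|S| \geqslant (n-1)/k$ and we are free to \emph{trim} $S$ down to exactly $\lceil (n-1)/k \rceil$, we may assume $|S| = (n-1)/k$ and hence $|P| \geqslant n - 1 - (n-1)/k \geqslant n/2$ for $k \geqslant 6$ and $n$ large.

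Second, I would count triples $(s,s',p) \in S \times S \times P$ with $f_p(xs) = f_p(xs')$. Fix $p \in P$. The coloring $f_p$ assigns to the $|S|$ edges $\{xs : s \in S\}$ colors in $[k]$; if the color classes have sizes $a_1, \dots, a_k$ with $\sum_i a_i = |S|$, then the number of ordered pairs $(s,s')$ (including $s = s'$) with $f_p(xs) = f_p(xs')$ is $\sum_i a_i^2 \geqslant |S|^2 / k$ by Cauchy--Schwarz (convexity). Summing over all $p \in P$, the number of such triples is at least $|P| \cdot |S|^2 / k \geqslant \frac{n}{2} \cdot \frac{1}{k}\left(\frac{n-1}{k}\right)^2 = \frac{n(n-1)^2}{2k^3} \geqslant \frac{n^3}{24k^3}$ for $n$ large (since $(n-1)^2 \geqslant n^2/2$ already suffices, giving $\tfrac{n^3}{4k^3}$, comfortably beating $\tfrac{n^3}{24k^3}$; the slack absorbs the degenerate $s = s'$ pairs if one insists on $s \neq s'$). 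This completes all three requirements.

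I do not anticipate a genuine obstacle here — the statement is a clean double application of pigeonhole and convexity, and the constant $24$ is deliberately generous to absorb the trimming of $S$, the bound $|P| \geqslant n/2$, and the treatment of diagonal pairs. The only point requiring a little care is the bookkeeping between the lower bound $|S| \geqslant (n-1)/k$ (needed for the first clause) and the fact that we may simultaneously assume an \emph{upper} bound on $|S|$ (needed to keep $|P|$ large); this is resolved simply by passing to a subset of $S$ of the appropriate size, which preserves the monochromatic property of $f_x$ on $S$. If one prefers to avoid trimming, one can instead split into the cases $|S| \leqslant n/2$ and $|S| > n/2$: in the former $|P| \geqslant n/2 - 1$ directly, and in the latter one can even throw away part of $S$ for free.
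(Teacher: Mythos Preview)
Your proposal is correct and follows essentially the same two-step pigeonhole/convexity argument as the paper: pick $x$ by pigeonhole to get the monochromatic star $S$, then for each $p\in P$ lower-bound the number of same-colored pairs among $\{xs:s\in S\}$ by convexity and sum over $p$. The only cosmetic differences are that the paper uses $\sum_i\binom{|f_p^{-1}(i)|}{2}\geqslant k\binom{|S|/k}{2}$ where you use $\sum_i a_i^2\geqslant |S|^2/k$, and your explicit trimming of $S$ to size $\lceil (n-1)/k\rceil$ is a point the paper leaves implicit when it writes $|P|\geqslant n-(n-1)/k-1$.
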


\begin{proof}[Proof of Lemma~\ref{lem:star}]
Consider the complete graph $K_{n}$, for any set of $n$ $k$-colorings of $K_{n}$, we can take an arbitrary vertex $x\in V(K_{n})$, by pigeonhole principle, there exists a set $S$ with $|S|\geqslant\frac{n-1}{k}$ such that $f_{x}$ assigns the same color to all edges $xs\in E(K_{n})$ with $s\in S$. Now we set $P:=V(K_{n})\setminus(S\cup\{x\})$ and conut the number of triples $(s,s',p)\in S\times S\times P$ with the property that $f_{p}(xs)=f_{p}(xs')$. Let $f_{p}^{-1}(i)$ be the set of vertices $s\in S$ such that $f_{p}(xs)=i$, as each vertex $p\in P$ contributes
\begin{equation*}
    \sum\limits_{i=1}^{k}\binom{|f_{p}^{-1}(i)|}{2}\geqslant k\cdot\binom{|S|/k}{2}\geqslant\frac{n^{2}}{12k^{3}}
\end{equation*}
many such triples by the convexity of the function $\binom{x}{2}$. Moreover, since $n$ is sufficiently large, we have $|P|\geqslant n-\frac{n-1}{k}-1\geqslant\frac{n}{2}$. Hence the total number of triples $(s,s',p)\in S\times S\times P$ with the desired property is at least $\frac{n^{3}}{24k^{3}}$.
\end{proof}

\begin{lemma}\label{lemma:matching}
For any set of $3n$ $k$-colorings of $K_{3n}$ associated with $3n$ vertices, there is a subset $Y\in V(K_{3n})$ of size $n$ and a matching $M=\{e_{1},e_{2},\ldots,e_{n}\}$ on vertex set $V(K_{3n})\setminus Y$, such that the number of triples $(e,e',p)\in M\times M\times Y$ with $f_{p}(e)=f_{p}(e')$ is at least $\frac{n^{3}}{3k}$.
\end{lemma}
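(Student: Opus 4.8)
My plan is to mimic the structure of the proof of Lemma~\ref{lem:star}, replacing the star at a vertex $x$ by a matching. First I would fix, once and for all, an arbitrary partition of $V(K_{3n})$ into three sets $A,B,Y$, each of size $n$, together with a fixed perfect matching $M=\{e_1,\dots,e_n\}$ between $A$ and $B$ (so $e_i=a_ib_i$ with $a_i\in A$, $b_i\in B$). This matching lives on $V(K_{3n})\setminus Y$, as required; note that, unlike in the star lemma, here no pigeonhole step is needed to produce $M$, since any copy of $M$ will do and the colorings $f_p$ are adversarial. The goal is then purely a counting statement about the $n$ colorings $\{f_p : p\in Y\}$ restricted to the $n$ edges of $M$.

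The core step is the convexity estimate applied for each fixed $p\in Y$. For such a $p$, partition the edges of $M$ by the color $f_p$ assigns to them: for $i\in[k]$ let $N_i=\{e\in M : f_p(e)=i\}$, so $\sum_{i=1}^k |N_i| = n$. The number of ordered pairs $(e,e')\in M\times M$ (including $e=e'$, or excluding — I will arrange the bookkeeping to match the claimed constant) with $f_p(e)=f_p(e')$ is $\sum_{i=1}^k |N_i|^2$, and by the Cauchy–Schwarz inequality (equivalently, convexity of $x\mapsto x^2$) this is at least $\frac{1}{k}\bigl(\sum_i |N_i|\bigr)^2 = \frac{n^2}{k}$. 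Summing over the $n$ choices of $p\in Y$ gives at least $\frac{n^3}{k}$ triples $(e,e',p)$ with $f_p(e)=f_p(e')$; discarding the degenerate triples with $e=e'$ costs only $n\cdot n = n^2$ of them, so for $n$ large we still have at least $\frac{n^3}{3k}$ such triples, which is the desired bound. (If one prefers to count unordered pairs one uses $k\binom{n/k}{2}\ge \frac{n^2}{3k}$ per vertex for $n$ large, reaching the same conclusion; either route works and the constant $\tfrac13$ is comfortably loose.)

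I do not anticipate a genuine obstacle here — the statement is essentially the same pigeonhole-plus-convexity argument as Lemma~\ref{lem:star}, only simpler because we get to choose the matching $M$ freely rather than extracting it from a coloring. The one point requiring a little care is the asymptotic absorption of the lower-order terms: one should make sure that $n$ is taken large enough (depending on $k$, but $k$ is bounded below by $6$ and the dependence is trivial) that the $\Theta(n^2)$ loss from degenerate pairs and from the floor in $n/k$ is dominated by the gap between $\frac{n^3}{k}$ and $\frac{n^3}{3k}$. This is immediate for $n \ge$ some small constant multiple of $k$, which is already subsumed by the standing assumption that $n$ is sufficiently large.
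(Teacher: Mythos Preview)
Your proposal is correct and follows exactly the approach the paper intends: it explicitly says the proofs of Lemmas~\ref{lemma:matching}--\ref{lem:KST} are similar to that of Lemma~\ref{lem:star} and omits the details, and your argument is precisely the natural adaptation --- fix an arbitrary $Y$ and matching $M$, then apply convexity of $\binom{x}{2}$ (equivalently Cauchy--Schwarz on $\sum_i |N_i|^2$) for each $p\in Y$ and sum. The bookkeeping you outline for absorbing the lower-order terms into the slack between $\tfrac{n^3}{k}$ and $\tfrac{n^3}{3k}$ is exactly right.
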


\begin{lemma}\label{lemma:cliques}
For any set of $2n$ $k$-colorings of $K_{2n}$ associated with $2n$ vertices, there are disjoint subsets $A$ and $B$ of size $n$, such that the number of triples $(a,b_{1},b_{2})\in A\times B\times B$ with $f_{a}(ab_{1})=f_{a}(ab_{2})$ is at least $\frac{n^{3}}{3k}$.
\end{lemma}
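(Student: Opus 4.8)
The plan is to mirror the proof of Lemma~\ref{lem:star}, except that here the partition is chosen by hand rather than forced by an initial pigeonhole step, since the statement only asserts the \emph{existence} of disjoint sets $A$ and $B$ of size $n$. Concretely, I would fix an \emph{arbitrary} partition $V(K_{2n}) = A \sqcup B$ with $|A| = |B| = n$ and show the claimed count holds for every such partition.

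The main step is a convexity estimate vertex by vertex. For a fixed $a \in A$, the edges from $a$ into $B$ are colored by $f_a$ with colors in $[k]$, so writing $B_i(a) := \{b \in B : f_a(ab) = i\}$ we obtain a partition $B = B_1(a) \sqcup \cdots \sqcup B_k(a)$ with $\sum_{i=1}^{k} |B_i(a)| = n$. The number of ordered pairs $(b_1,b_2) \in B \times B$ with $b_1 \neq b_2$ and $f_a(ab_1) = f_a(ab_2)$ equals $\sum_{i=1}^{k} |B_i(a)|\bigl(|B_i(a)| - 1\bigr) = \sum_{i=1}^{k} |B_i(a)|^2 - n$, which by the Cauchy--Schwarz inequality (equivalently, convexity of $x \mapsto x^2$) is at least $\tfrac{1}{k}\bigl(\sum_{i=1}^{k} |B_i(a)|\bigr)^2 - n = \tfrac{n^2}{k} - n$.

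Summing this bound over all $a \in A$ shows that the number of triples $(a,b_1,b_2) \in A \times B \times B$ with $b_1 \neq b_2$ and $f_a(ab_1) = f_a(ab_2)$ is at least $n\bigl(\tfrac{n^2}{k} - n\bigr) = \tfrac{n^3}{k} - n^2$. Since $n$ is sufficiently large and $k$ is a fixed integer (in particular $n \geq \tfrac{3k}{2}$), we have $\tfrac{n^3}{k} - n^2 \geq \tfrac{n^3}{3k}$, which is the desired conclusion. If one instead allows $b_1 = b_2$, the diagonal correction disappears and the bound $\tfrac{n^3}{k} \geq \tfrac{n^3}{3k}$ is immediate.

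There is no genuine obstacle here: the argument is a one-line pigeonhole-plus-convexity estimate, identical in spirit to Lemma~\ref{lem:star}, and the only point requiring a moment's care is that the triples must have $b_1 \neq b_2$, which is absorbed by the crude constant $\tfrac{1}{3k}$ swallowing the linear diagonal term. In fact the stronger bound $\tfrac{n^3}{k} - n^2$ is available, so the stated lemma is comfortably slack; I would keep the weaker form only to align the constants with their use in Section~\ref{sec:dense}.
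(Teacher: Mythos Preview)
Your proof is correct and follows essentially the same approach as the paper, which indicates that Lemmas~\ref{lemma:matching}--\ref{lem:KST} are proved identically to Lemma~\ref{lem:star} via a per-vertex convexity count. The one slightly imprecise phrase is ``$k$ is a fixed integer'' (in the applications $k$ grows with $n$), but you already cover this by noting that the lemma's literal statement allows $b_1=b_2$, in which case the bound $\tfrac{n^3}{k}\ge\tfrac{n^3}{3k}$ holds for every $k$.
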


\begin{lemma}\label{lem:KST}
For any set of $3n$ $k$-colorings of $K_{3n}$ associated with $3n$ vertices, there exists a subset $X$ of size $n$ and a complete graph $K_{2n}$ on vertex set $V(K_{3n})\setminus X$ such that the number of triples $(b,e,e')\in X\times E(K_{2n})\times E(K_{2n})$ with $f_{b}(e)=f_{b}(e')$ is at least $\frac{n^{5}}{3k}$.

\end{lemma}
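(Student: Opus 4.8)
The plan is to mimic the averaging-plus-pigeonhole argument used in Lemma~\ref{lem:star} and Lemma~\ref{lemma:matching}, but now producing a bipartite-type structure where one side is a set $X$ of $n$ ``outside'' vertices and the other side is the edge set of a $K_{2n}$. First I would take the ground set $V(K_{3n})$ and choose a uniformly random partition into two parts, one of size $n$ (to become $X$) and one of size $2n$ (to span the $K_{2n}$); equivalently I will show existence by averaging. For a fixed choice of the $2n$-set $W$ spanning $K_{2n}$ and a fixed vertex $b\in X = V(K_{3n})\setminus W$, the number of \emph{monochromatic} pairs of edges under $f_b$ restricted to $E(K_{2n})$ is $\sum_{i=1}^{k}\binom{m_i(b)}{2}$, where $m_i(b)$ counts the edges of $K_{2n}$ that $f_b$ colors $i$, and $\sum_i m_i(b)=\binom{2n}{2}$. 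By convexity of $\binom{x}{2}$ this is at least $k\binom{\binom{2n}{2}/k}{2}$, which for large $n$ is at least something of order $n^4/k$. Summing over the $n$ choices of $b\in X$ gives at least a quantity of order $n^5/k$, and the constant works out to beat $\frac{n^5}{3k}$ once $n$ is large.

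The only subtlety is that the partition itself should be chosen (or shown to exist) so that the bound is uniform, but in fact the computation above already holds for \emph{every} partition into a set of size $n$ and a set of size $2n$: the bound $\sum_{i=1}^k\binom{m_i(b)}{2}\ge k\binom{\binom{2n}{2}/k}{2}\ge \frac{n^4}{3k}$ (say) depends only on the sizes, not on which vertices were chosen, so no averaging over partitions is even needed — I can just fix an arbitrary partition $V(K_{3n})=X\sqcup W$ with $|X|=n$, $|W|=2n$, let $K_{2n}$ be the clique on $W$, and sum the per-vertex count over $b\in X$. Thus the total number of triples $(b,e,e')\in X\times E(K_{2n})\times E(K_{2n})$ with $f_b(e)=f_b(e')$ is at least $n\cdot\frac{n^4}{3k}=\frac{n^5}{3k}$, as claimed.

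The main thing to be careful about is the constant bookkeeping: one must check that $k\binom{\binom{2n}{2}/k}{2}\ge \frac{n^4}{3k}$ for all large $n$ and all integers $k\ge 6$. Since $\binom{2n}{2}=2n^2-n\ge n^2$ for $n\ge 1$, we have $\binom{2n}{2}/k\ge n^2/k$, and $\binom{x}{2}=\frac{x(x-1)}{2}\ge \frac{x^2}{4}$ once $x\ge 2$, which holds here because $n^2/k$ is huge for large $n$; hence $k\binom{\binom{2n}{2}/k}{2}\ge k\cdot\frac{(n^2/k)^2}{4}=\frac{n^4}{4k}$, and $\frac{n^4}{4k}\ge\frac{n^4}{3k}$ fails by a factor, so I would instead be slightly less wasteful: use $\binom{x}{2}\ge\frac{x^2}{3}$ for $x$ large, or simply keep the cleaner bound $\binom{x}{2}\ge\frac{x^2}{2}(1-o(1))$ and absorb the lower-order loss into the ``$n$ sufficiently large'' hypothesis so that the final count is at least $\frac{n^5}{3k}$. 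I do not anticipate any real obstacle here — this lemma is the complete-bipartite analogue of Lemma~\ref{lemma:matching} and Lemma~\ref{lemma:cliques}, and the paper explicitly says the proofs are all similar — the only ``work'' is getting the convexity estimate and the constant right.
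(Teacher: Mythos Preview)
Your proposal is correct and is exactly the argument the paper has in mind: the paper omits the proof of this lemma, saying it is ``similar'' to that of Lemma~\ref{lem:star}, and your convexity/pigeonhole count on an arbitrary size-$n$/size-$2n$ split is precisely that analogue. The constant worry you raise disappears once you use $\binom{x}{2}\ge x^{2}/3$ for $x\ge 3$ (valid since $\binom{2n}{2}/k\ge n^{2}/k\gg 3$ here), giving $k\binom{\binom{2n}{2}/k}{2}\ge n^{4}/(3k)$ per vertex and hence $n^{5}/(3k)$ in total.
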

\section{Dense graphs}\label{sec:dense}
In this section, we mainly focus on dense graphs such as complete graphs and complete bipartite graphs. 

\subsection{Cliques with at least $8$ vertices}\label{sec:clique}
Here we first prove that $C(n,K_{8})=\Omega(n^{{2/3}})$. Let $k:=cn^{{2/3}}$, where the constant $c$ is very small. We consider the complete graph with $2n$ vertices. For any set of $2n$ $k$-colorings of $K_{2n}$, our aim is to find a copy of $K_{8}$ such that none of its vertices can induce a rainbow coloring. By Lemma~\ref{lemma:cliques}, we can partition the vertex set of $K_{2n}$ into two part $A$ and $B$ with $|A|=|B|=n$ and then the number of triples $(a,b_{1},b_{2})\in A\times B\times B$ such that $f_{a}(ab_{1})=f_{a}(ab_{2})$ is at least $\frac{n^{{7/3}}}{3c}$. By pigeonhole principle, there exists a pair of vertices in $B$, called $(b_{1},b_{2})$, such that there are at least $\frac{n^{{1/3}}}{3c}$ many distinct vertices $a\in A$ satisfying $f_{a}(ab_{1})=f_{a}(ab_{2})$. Then we choose a subset $A'$ which consists of the vertices satisfying the above property with size $|A'|=\frac{n^{{1/3}}}{3c}$.

Let $E(A')$ be the edge set of complete graph $K_{|A'|}$, then we use $f_{b_{1}} $ to color the edges in $E(A')$. Note that there are $|E(A')|=\binom{n^{{1/3}}/{3c}}{2}>\frac{n^{{2/3}}}{19c}\gg 300k$ edges in the complete graph induced by $A'$ as the constant $c$ can be very small, hence by pigeonhole principle, we can recursively find a set of disjoint triples of the form $(h_{3i+1},h_{3i+2},h_{3i+3})\in E(A')\times E(A')\times E(A')$ with the property that $f_{b_{1}}(h_{3i+1})=f_{b_{1}}(h_{3i+2})=f_{b_{1}}(h_{3i+3})$ till it covers $99\%$ of the edges in $E(A')$. Moreover, we do the same operation but change $b_{1}$ to $b_{2}$, then we obtain two groups of internally disjoint triples, each of which covers $99\%$ of the edges in $E(A')$. Now we can choose three different edges $e^{1},e^{2},e^{3}$, where $e^{1}$ belongs to both groups of triples, such that $f_{b_{1}}(e^{1})=f_{b_{1}}(e^{2})$ and $f_{b_{2}}(e^{1})=f_{b_{2}}(e^{3})$.
Next we select the vertices of $\{e^{1},e^{2},e^{3}\}$, if there are some common vertices of these edges, we arbitrarily add some vertices from $A'$ to make sure there are $6$ distinct vertices, denoted by $\{a_{1},a_{2},a_{3},a_{4},a_{5},a_{6}\}$. Since all of them belong to the set $A'$, we have $f_{a_{i}}(a_{i}b_{1})=f_{a_{i}}(a_{i}b_{2})$ for $1\leqslant i\leqslant 6$. As a consequence, we find a copy of $K_{8}$ induced by $\{b_{1},b_{2},a_{1},a_{2},a_{3},a_{4},a_{5},a_{6}\}$, which does not admit a rainbow coloring. The proof of $C(n,K_{8})=\Omega(n^{{2/3}})$ is finished.
 
 \begin{remark}
   
 To show $C(n,K_{r})=\Omega(n^{{2/3}})$ for $r>8$, we just need to add $r-8$ vertices from $A'$ to the selected vertex set of $K_{8}$.
 \end{remark}

\subsection{Complete bipartite graph $K_{s,t}$ with $t\geqslant s\geqslant 7$}\label{sec:completebipartite}
Here we first prove that $C(n,K_{7,7})=\Omega(n^{{2/3}})$. Let $k:=cn^{{2/3}}$, where the constant $c$ is very small. We consider a complete graph with $3n$ vertices. For any set of $3n$ $k$-colorings of $K_{3n}$, we try to find a copy of $K_{7,7}$ such that none of its vertices can induce a rainbow coloring. By Lemma~\ref{lem:KST}, we can partition the vertex set of $K_{3n}$ into two parts, say $V_{L}\cup V_{R}$, where $|V_{L}|=2n$ and $|V_{R}|=n$, write $E_{L}$ for the edge set of $K_{|V_{L}|}$, the number of the triples $(b,e,e')\in V_{R}\times E_{L}\times E_{L}$ with the property that $f_{b}(e)=f_{b}(e')$ is at least $\frac{n^{{13/3}}}{3c}$. As the number of edges in $E_{L}$ is $\binom{2n}{2}\leqslant 2n^{2}$, by pigeonhole principle, there exists some pair $(e_{1},e_{2})\in E_{L}\times E_{L}$ such that there are at least $\frac{n^{{13/3}}}{3c}\cdot\frac{1}{4n^{4}}\geqslant\frac{n^{{1/3}}}{12c}$ many vertices $b\in V_{R}$ satisfying $f_{b}(e_{1})=f_{b}(e_{2})$. Now let $V_{R}'$ be a subset of $V_{R}$ consisting of the vertices $b\in V_{R}$ such that $f_{b}(e_{1})=f_{b}(e_{2})$, and $|V_{R}'|\geqslant \frac{n^{{1/3}}}{12c}$. Moreover, we write the vertices of $e_{1}$ and $e_{2}$ as $e_{1}=v_{1}v_{2}$ and $e_{2}=u_{1}u_{2}$, respectively.

Next, we consider the coloring functions $f_{v_{1}}$, $f_{v_{2}}$, $f_{u_{1}}$ and $f_{u_{2}}$ on the edge set $E(V_{R}')$. Here if $e_{1}$ and $e_{2}$ have a common vertex, then we can add an arbitrary vertex from $V_{L}$ and regard it as the vertex $u_{2}$.

Note that $|E(V_{R}')|=\binom{n^{{1/3}}/12c}{2}\geqslant\frac{n^{{2/3}}}{289c^{2}}$, by pigeonhole principle, we can recursively select a family $\mathcal{F}_{1}$ of disjoint pairs of edges $(h_{2i+1},h_{2i+2})\in E(V_{R}')\times E(V_{R}')$ with the property that $f_{v_{1}}(h_{2i+1})=f_{v_{1}}(h_{2i+2})$, till it covers $99\%$ of the edges in $E(V_{R}')$. This is possible since we can set $c$ to be small enough so that $\frac{|E(V_{R}')|}{100}\gg 29k$. Then we repeat the same operations but change $v_{1}$ to $v_{2}$ and change the pairs of edges to the sets of $7$ edges, we can similarly obtain a family $\mathcal{F}_{2}$ of pairwise disjoint sets of $7$ edges, which covers $99\%$ of the edges in $E(V_{R}')$ and each set of $7$ edges receives the same color from the function $f_{v_{2}}$. We continue to do the same operations twice, but replace the corresponding vertices with $u_{1}$ and with $u_{2}$, and also replace the sets of $7$ edges to the sets of $16$ edges and $29$ edges respectively. Finally, we can obtain four families $\mathcal{F}_{1},\mathcal{F}_{2},\mathcal{F}_{3},\mathcal{F}_{4}$ of internally disjoint sets of edges.  

 We can pick one edge such that for each family, there is some set containing this edge, moreover, we denote this edge as $e^{1}$. After we choose the edge $e^{1}$, then the edge $e^{2}$ with $(e^{1},e^{2})\in\mathcal{F}_{1}$ is determined. Next, consider the set of edges in $\mathcal{F}_{2}$ which contains $e^{1}$, we need to carefully pick some edge $e^{3}$ from this set, such that the vertices in $\{e^{1},e^{2},e^{3}\}$ does not form any odd cycle. Indeed, there are $7$ choices but the number of potential edges which can lead to an odd cycle is at most $\binom{4}{2}=6$. Similarly, we need to carefully choose some edge $e^{4}$ from the set in $\mathcal{F}_{3}$ which contains the edge $e^{1}$, such that the vertices $\{e^{1},e^{2},e^{3},e^{4}\}$ does not form any odd cycle. This is also possible as there are $16$ choices but the number of potential edges which may induce an odd cycle are at most $\binom{6}{2}=15$. Also we pick an edge $e^{5}$ from the $29$-element set in $\mathcal{F}_{4}$ which contains $e^{1}$ such that $\{e^{1},e^{2},e^{3},e^{4},e^{5}\}$ does not form any odd cycle for the similar reasons.
 
Finally we can find $7$ edges $e_{1},e_{2},e^{1},e^{2},e^{3},e^{4},e^{5}$ with the following properties.
\begin{itemize}
    \item $f_{v_{1}}(e^{1})=f_{v_{1}}(e^{2})$; $f_{v_{2}}(e^{1})=f_{v_{2}}(e^{3})$; $f_{u_{1}}(e^{1})=f_{u_{1}}(e^{4})$; $f_{u_{2}}(e^{1})=f_{u_{2}}(e^{5})$.
\end{itemize}
If there is some common vertex between any pair of $e^{i}$ and $e^{j}$ for $1\leqslant i\leqslant j\leqslant 5$, then we add some vertices from $V_{R}'$ to guarantee that the edges can be written as $e^{1}=a_{1}a_{2}$, $e^{2}=a_{3}a_{4}$, $e^{3}=a_{5}a_{6}$, $e^{4}=a_{7}a_{8}$ and $e^{5}=a_{9}a_{10}$. Moreover, we need to add some edges from $E(G)$ to form a copy of complete bipartite graph $K_{7,7}$, which contains $\{e_{1},e_{2},e^{1},e^{2},e^{3},e^{4},e^{5}\}$
and its corresponding vertex set $V(K_{7,7})=\{v_{1},v_{2},u_{1},u_{2},a_{1},a_{2},a_{3},a_{4},a_{5},a_{6},a_{7},a_{8},a_{9},a_{10}\}$.
Note that, for $1\leqslant i\leqslant 10$, we have $f_{a_{i}}(e_{1})=f_{a_{i}}(e_{2})$ by the construction of $V_{R}'$. Hence, we find a copy of $K_{7,7}$ such that none of its vertices can induce a rainbow coloring. The proof of $C(n,K_{7,7})=\Omega(n^{{2/3}})$ is finished. 

\begin{remark}

To show $C(n,K_{s,t})=\Omega(n^{{2/3}})$ for $t\geqslant s\geqslant 7$ and $t>7$, we just need to add $s+t-14$ vertices from $V_{R}'$ to the selected vertex set of $K_{7,7}$.
Moreover, we actually prove that $C(n,P_{3}\sqcup K_{5,5}\sqcup\overline{K_{h}} )=\Omega(n^{{2/3}})$ for arbitrary constant $h$.
\end{remark}

\section{Sparse Graphs}\label{sec:Sparse}
In this section, we prove the improved lower bounds on $C(n,H)$ when $H$ are relatively sparse graphs. 
\subsection{Paths of short length}\label{sec:P4}
First we prove $C(n,P_{4})=\Omega(n^{{1/5}})$. Let $k:=cn^{{1/5}}$, where the constant $c>0$ can be taken sufficiently small. For any set of $2n$ $k$-colorings of $K_{2n}$, denoted by $f$, it suffices to find a copy of $P_{4}$ such that there is no vertex that assigns distinct colors to all edges of this special copy of $P_{4}$.

We partition the vertex set of $K_{2n}$ into two parts $A$ and $B$ with $|A|=|B|=n$. Consider the number of quadruples $(a,b_{1},b_{2},b_{3})\in A\times B\times B\times B$ such that $f_{a}(ab_{1})=f_{a}(ab_{2})=f_{a}(ab_{3})$. Let $f_{a}^{-1}(i)$ be the set of vertices $b\in B$ such that $f_{a}(ab)=i$. Observe that each element in $A$ contributes
\begin{equation*}
    \sum\limits_{i=1}^{cn^{\frac{1}{5}}}\binom{|f_{a}^{-1}(i)|}{3}\geqslant cn^{\frac{1}{5}}\cdot\binom{|B|/cn^{\frac{1}{5}}}{3}\geqslant\frac{n^{\frac{13}{5}}}{7c^2}
\end{equation*}
many such quadruples by the convexity of the function $\binom{x}{3}$.
Moreover, since there are $n$ choices for $a\in A$, totally there are at least $\frac{n^{{18/5}}}{7c^2}$ such quadruples in $A\times B\times B\times B$. By pigeonhole principle, there is a triple of vertices in $B$, called $(b_{1},b_{2},b_{3})$, such that there are at least $\frac{n^{{3/5}}}{7c^2}$ many vertices $a\in A$ satisfying $f_{a}(ab_{1})=f_{a}(ab_{2})=f_{a}(ab_{3})$. Next let $A'\subseteq A$ be a subset with $|A'|\geqslant\frac{n^{{3/5}}}{7c^2}$, which consists of the vertices satisfying the above property.
Then we consider the $k^3$-coloring $(f_{b_{1}}(b_{2}a),f_{b_{2}}(b_{2}a),f_{b_{3}}(b_{2}a))$ for all $a\in A'$. By pigeonhole principle, there are at least $\frac{1}{7c^{5}}\ge 2 $ elements in $A'$ receiving the same color, since $c$ is sufficiently small. We choose two of them arbitrarily, and denote them as $a_{1}$ and $a_{2}$. Now through the above analysis, we can find a copy of $P_{4}$ on vertex set $\left \{b_{1},a_{1},b_{2},a_{2},b_{3}\right \} $ with the following properties:
\begin{itemize}
    \item $f_{a_{1}}(a_{1}b_{1})=f_{a_{1}}(a_{1}b_{2})$; $f_{a_{2}}(a_{2}b_{3})=f_{a_{2}}(a_{2}b_{2})$;
    \item $f_{b_{1}}(b_{2}a_{2})=f_{b_{1}}(b_{2}a_{1})$; $f_{b_{2}}(b_{2}a_{2})=f_{b_{2}}(b_{2}a_{1})$;  $f_{b_{3}}(b_{2}a_{2})=f_{b_{3}}(b_{2}a_{1})$.
\end{itemize}
That means, none of the vertices in this special copy of $P_{4}$ can induce a rainbow coloring. The proof of $C(n,P_{4})=\Omega(n^{{1/5}})$ is finished.

\subsection{Stars}\label{sec:stars}
For the star $S_{t}$ with $t$ leaves, we first consider the case of $t=4$ and then obtain a better bound for any larger positive integer $t\geqslant 5$.

\subsubsection{Star with $4$ leaves}
Here we give a proof of $C(n,S_{4})=\Omega(n^{{1/3}})$. Let $k:=cn^{{1/3}}$, and $c$ be sufficiently small. Consider the complete graph $K_{n}$, for any set of $n$ $k$-colorings of $K_{n}$, we need to find a copy of $S_{4}$ such that no vertex assigns distinct colors to all edges of this special copy of $S_{4}$. By Lemma~\ref{lem:star}, there exists a vertex $x\in V(K_{n})$, a set $S$ with $|S|\geqslant{(n-1)/k}$ and $f_{x}(xs_{1})=f_{x}(xs_{2})$ for all distinct $s_{1},s_{2}\in S$ and a set $P=V(K_{n})\setminus(S\cup\{x\})$, such that the number of triples $(s,s',p)\in S\times S\times P$ with $f_{p}(xs)=f_{p}(xs')$ is at least ${n^{2}/24c^{3}}$.

Then by pigeonhole principle, there exists a pair of elements $(s,s')\in S\times S$ such that the number of vertices $p\in P$ with $f_{p}(xs)=f_{p}(xs')$ is at least $({n^{2}/24c^{3}})\cdot\frac{c^{2}}{n^{{4/3}}}=\frac{n^{{2/3}}}{24c}$. Let $A$ be the subset consisting of the above vertices $p\in P$.

For the coloring function $f_{s}$, by pigeonhole principle, there is a subset $A'\subseteq A$ with $|A'|\geqslant\frac{n^{{2/3}}}{24c}\cdot\frac{1}{cn^{{1/3}}}=\frac{n^{{1/3}}}{24c^{2}}$, such that $f_{s}$ assigns the same color to all edges $xa$ with $a\in A'$. Similarly for $f_{s'}$, by pigeonhole principle again, there is a subset $A''\subseteq A'$ with $|A''|\geqslant\frac{n^{{1/3}}}{24c^{2}}\cdot\frac{1}{cn^{{1/3}}}={1/(24c^{3})}$, such that $f_{s'}$ assigns the same color to all edges $xa$ with $a\in A''$. Since $c$ is small enough, we have ${1/(24c^{3})}\ge 2$. Then we can find a pair of distinct elements $(a,a')\in A''\times A''$.

Finally, we can find a copy of $S_{4}$ on vertex set $\{x,a,a',s,s'\}$, where $x$ is the center, satisfies the following properties:
\begin{itemize}
    \item $f_{x}(xs)=f_{x}(xs')$; $f_{s}(xa)=f_{s}(xa')$; $f_{s'}(xa)=f_{s'}(xa')$; $f_{a}(xs)=f_{a}(xs')$; $f_{a'}(xs)=f_{a'}(xs')$.
\end{itemize}
Hence, we find a copy of $S_{4}$, such that none of its vertices assigns distinct colors to all edges. The proof of  $C(n,S_{4})=\Omega(n^{{1/3}})$ is finished.

\begin{remark}\label{remark:p2k2k2}
  In the above proof, after we have found a set $P$ with $|P|\geqslant\frac{n^{{2/3}}}{24c}$ and the triple $\{x,s,s'\}\in P\times S\times S$ satisfying $f_{x}(xs)=f_{x}(xs')$, $f_{p}(xs)=f_{p}(xs')$ for all $p\in P$, we can find a matching $M$ on the vertex set $P$ and consider the $k^2$-coloring $(f_{s},f_{s'})$. Note that $|P|\geqslant\frac{n^{{2/3}}}{24c}\gg  k^2$, by pigeonhole principle, we can find a pair of edges $e_{1}$ and $e_{2}$ in the matching $M$ such that $f_{s}(e_{1})=f_{s'}(e_{1})$ and $f_{s}(e_{2})=f_{s'}(e_{2})$. This gives a copy of $P_{2}\sqcup K_{2}\sqcup K_{2}$ with edge set $\{xs,xs',e_{1},e_{2}\}$ such that no vertex induces a rainbow coloring. Thus we have $C(n,P_{2}\sqcup K_{2}\sqcup K_{2})=\Omega(n^{{1/3}})$.
\end{remark}

\begin{remark}\label{remark:p2p2}
  We can pick a vertex $p\in P$, and use the $k^2$-coloring $(f_{s},f_{s'})$ to color all edges incident to vertex $p$ in the graph induced by $P$. As $|P|\geqslant\frac{n^{{2/3}}}{24c}\gg  k^2$, we can find a pair of vertices $a,b\in P$ such that the edges $pa$ and $pb$ receive the same color under the $k^2$-coloring $(f_{s},f_{s'})$. This gives a copy of $P_{2}\sqcup P_{2}$ with edges set $\{pa,pb,xs,xs'\}$ such that no vertex induces a rainbow coloring. Thus we have $C(n,P_{2}\sqcup P_{2})=\Omega(n^{{1/3}})$. Moreover, we can add arbitrary $r$ isolated vertices from the set $P$ to obtain same lower bound for $C(n,P_{2}\sqcup K_{2}\sqcup K_{2}\sqcup rK_{1})$ and $C(n,P_{2}\sqcup P_{2}\sqcup rK_{1})$ for arbitrary non-negative integer $r$. Moreover, since any path of length at least $5$ contains a copy of $P_{2}\sqcup P_{2}$, the second result in Theorem~\ref{thm:paths} follows.
\end{remark}

\subsubsection{Stars with more than $4$ leaves}\label{star more}
We next consider the case of $t=5$. Let $k:=cn^{{1/2}}$ with sufficiently small constant $c>0$. For any set of $n$ $k$-colorings of $K_{n}$, by Lemma~\ref{lem:star}, we can find a vertex $x\in V(K_{n})$, a set $S$ of size $|S|\geqslant{(n-1)/k}$ with $f_{x}(xs_{1})=f_{x}(xs_{2})$ for all distinct $s_{1},s_{2}\in S$ and a subset $P=V(K_{n})\setminus(S\cup\{x\})$, such that the number of triples $(s,s',p)\in S\times S\times P$ with $f_{p}(xs)=f_{p}(xs')$ is at least $\frac{n^{{3/2}}}{24c^{3}}$. Also using pigeonhole principle, we can find a pair of elements $(s,s')\in S\times S$ such that the number of vertices $p\in P$ with $f_{p}(xs)=f_{p}(xs')$ is at least $\frac{n^{{3/2}}}{24c^{3}}\cdot{c^{2}/n}=\frac{{n^{{1/2}}}}{24c}$. Let $A$ consist of the above vertices $p\in P$, without loss of generality, we assume that $|A|$ is divided by $9$.

Note that $c$ is small enough, hence for $i=1,2,\ldots,\frac{2}{9}\cdot |A|$, by pigeonhole principle, we can
recursively find disjoint triples $(p_{3i+1},p_{3i+2},p_{3i+3})\in A\times A\times A$ with the property that $f_{s}(xp_{3i+1})=f_{s}(xp_{3i+2})=f_{s}(xp_{3i+3})$. Moreover, we do the same operations for another vertex $s'$, and then we will obtain two sets of internally disjoint triples. Since the total number of vertices we obtain is larger than $|A|$, we can find one triple from each set respectively, such that they intersect, that is, there are distinct triples $\left \{ p_{t},p_{j},p_{k} \right \}\subseteq A$ and $\left \{ p_{t},p_{h},p_{m} \right \}\subseteq A $ such that $f_{s}(xp_{t})=f_{s}(xp_{j})=f_{s}(xp_{k})$, $f_{s'}(xp_{t})=f_{s'}(xp_{h})=f_{s'}(xp_{m})$, where $j=h$ or $k=m$ is also allowed. Finally, we can select a set of vertices $\left \{ x,s,s',p_{t},p_{h},p_{k} \right \}$ to form a copy of star centered at vertex $x$ with $5$ leaves with the properties:
\begin{itemize}
    \item $f_{x}(xs)=f_{x}(xs')$; $f_{s}(xp_{t})=f_{s}(xp_{k})$; $f_{s'}(xp_{t})=f_{s'}(xp_{h})$;
    \item $f_{p_{t}}(xs)=f_{p_{t}}(xs')$; $f_{p_{h}}(xs)=f_{p_{h}}(xs')$; $f_{p_{k}}(xs)=f_{p_{k}}(xs')$.
\end{itemize}

 None of the vertices assigns distinct colors to all edges of this $S_{5}$. The proof of $C(n,S_{5})=\Omega(n^{{1/2}})$ is finished.

\begin{remark}
  
For the remaining cases of $t>5$, we can just take $t-5$ vertices from $A$ and add them to the selected set $\left \{ x,s,s',p_{t},p_{h},p_{k} \right \}$ to obtain a copy of $S_{t}$ such that none of its vertices assigns distinct colors to all its edges, we omit the details here.

\end{remark}

\subsection{Matchings}\label{sec:matchings}
For the matching $I_{t}$, we first consider the cases of $t=4$ and $t\in\{5,6\}$, respectively. Furthermore, for any larger positive integer $t\geqslant 7$, we can even obtain a better lower bound.

\subsubsection{Matching of size $4$}\label{subsec:I4}
First we prove that $C(n,I_{4})=\Omega(n^{{1/5}})$. Let $k:=cn^{{1/5}}$ with $\frac{1}{c^{5}}\gg 12$. We consider the complete graph with $3n$ vertices. For any set of $3n$ $k$-colorings of $K_{3n}$, it suffices to find a copy of $I_{4}$ such that there is no vertex that assigns distinct colors to all edges of this special copy of $I_{4}$. By Lemma~\ref{lemma:matching}, we can partition the vertex set of $K_{3n}$ into two parts, say $X\cup Y$, where $|X|=2n$ and $|Y|=n$ and there is a matching $M=\{e_{1},e_{2},\ldots,e_{n}\}$ in $X$, where $e_{i}$ and $e_{j}$ are pairwise disjoint for all $1\leqslant i<j\leqslant n$ such that the number of triples $(e,e',p)\in M\times M\times Y$ with the property that $f_{p}(e)=f_{p}(e')$ is at least $\frac{n^{{14/5}}}{3c}$. By pigeonhole principle, there is a pair of edges in $M$, called $(e_{1},e_{2})$, such that there are at least $\frac{n^{{4/5}}}{3c}$ many vertices $p\in Y$ satisfying $f_{p}(e_{1})=f_{p}(e_{2})$. Let $A\subseteq Y$ consist of all the vertices $p\in Y$ such that $f_{p}(e_{1})=f_{p}(e_{2})$, as we have mentioned above, $|A|\geqslant \frac{n^{{4/5}}}{3c}$. Write the vertices of $e_{1}$ and $e_{2}$ as $e_{1}:=v_{1}u_{1}$, $e_{2}:=v_{2}u_{2}$ respectively. As the size of $A$ is at least $|A|\geqslant\frac{n^{{4/5}}}{3c}$, we can choose an arbitrary copy of matching $I_{{|A|/2}}$ in $A$, whose edge set is $H=\{h_{1},h_{2},\ldots,h_{{|A|/2}}\}$. Now we consider the coloring functions $f_{v_{1}}$, $f_{u_{1}}$, $f_{v_{2}}$ and $f_{u_{2}}$ of edges in $H$. Using pigeonhole principle for four times, we can find a subset $H'\subseteq H$ of edges with size $|H'|\geqslant\frac{n^{{4/5}}}{6c}\cdot\frac{1}{(cn^{{1/5}})^{4}}=\frac{1}{6c^{5}}\ge 2$ such that there is a pair of edges $h_{1}$ and $h_{2}$ in $H'$ satisfying $f_{v_{1}}(h_{1})=f_{v_{1}}(h_{2})$, $f_{v_{2}}(h_{1})=f_{v_{2}}(h_{2})$, $f_{u_{1}}(h_{1})=f_{u_{1}}(h_{2})$ and $f_{u_{2}}(h_{1})=f_{u_{2}}(h_{2})$. Fix such a pair of edges $h_{1}$ and $h_{2}$ and write them as $h_{1}:=a_{1}b_{1}$ and $h_{2}:=a_{2}b_{2}$, respectively. By the above analysis, we can find a copy of $I_{4}$ in $K_{3n}$ whose edge set is $\{e_{1},e_{2},h_{1},h_{2}\}$ with the following properties:
\begin{itemize}
    \item $f_{v_{1}}(h_{1})=f_{v_{1}}(h_{2})$; $f_{v_{2}}(h_{1})=f_{v_{2}}(h_{2})$; $f_{u_{1}}(h_{1})=f_{u_{1}}(h_{2})$; $f_{u_{2}}(h_{1})=f_{u_{2}}(h_{2})$;
    \item $f_{a_{1}}(e_{1})=f_{a_{1}}(e_{2})$; $f_{a_{2}}(e_{1})=f_{a_{2}}(e_{2})$; $f_{b_{1}}(e_{1})=f_{b_{1}}(e_{2})$; $f_{b_{2}}(e_{1})=f_{b_{2}}(e_{2})$.
\end{itemize}
The proof of $C(n,I_{4})=\Omega(n^{1/5})$ is finished since none of the vertices in this $I_{4}$ we find  can induce a rainbow coloring.

\subsection{Matchings of sizes $5$ and $6$}\label{subsec:I5}
In this part we show the better bounds for matchings of sizes $5$ and $6$. More precisely, we will show that $C(n,I_{5})=\Omega(n^{{1/3}})$ in details. The proof of $C(n,I_{6})=\Omega(n^{{1/3}})$ is similar, so we just provide a simple remark.

Let $k:=cn^{{1/3}}$, where the constant $c$ is very small. We also focus on the complete graph with $3n$ vertices. For any set of $3n$ $k$-colorings of $K_{3n}$, we need to show that there is a copy of $I_{5}$ such that none of its vertices induces a rainbow coloring. By Lemma~\ref{lemma:matching}, there exists a partition of the vertex set of $K_{3n}$ into $X\cup Y$ with $|X|=2n$ and $|Y|=n$ and a matching $M=\{e_{1},e_{2},\ldots,e_{n}\}$ in $X$, such that the number of triples $(e,e',p)\in M\times M\times Y$ with the property that $f_{p}(e)=f_{p}(e')$ is at least $\frac{n^{{8/3}}}{3c}$. Using pigeonhole principle, we can find a pair of edges in $M:=(e_{1},e_{2})$ with $e_{1}=v_{1}u_{1}$ and $e_{2}=v_{2}u_{2}$, such that there are at least $\frac{n^{{2/3}}}{3c}$ many vertices $p\in Y$ satisfying $f_{p}(e_{1})=f_{p}(e_{2})$. Then we denote $A$ as the subset of $Y$ consisting of the vertices $p\in Y$ such that $f_{p}(e_{1})=f_{p}(e_{2})$ and $|A|\geqslant\frac{n^{{2/3}}}{3c}$. As $|A|=\frac{n^{{2/3}}}{3c}$, we can choose an arbitrary copy of matching $I_{|A|/2}$ in $A$, whose edge set is $H=\{h_{1},h_{2},\ldots,h_{|A|/2}\}$.

Now we consider the $k^2$-coloring function $(f_{v_{1}}, f_{u_{1}})$ of edges in $H$. By pigeonhole principle, we can recursively find disjoint triples $(h_{3i+1},h_{3i+2},h_{3i+3})\in H\times H\times H$ with the property $f_{v_{1}}(h_{3i+1})=f_{v_{1}}(h_{3i+2})=f_{v_{1}}(h_{3i+3})$ and $f_{u_{1}}(h_{3i+1})=f_{u_{1}}(h_{3i+2})=f_{u_{1}}(h_{3i+3})$ till they cover $99\%$ of the elements in $H$ (this is possible because $c$ is small enough such that $\frac{1}{100}|H|\geqslant 3k^2$), then we do same operation but change $v_{1}$ to $v_{2}$ and $u_{1}$ to $u_{2}$ respectively, we will also obtain many disjoint triples cover $99\%$ of the elements in $H$ and every such chosen triple is colored same by the $k^2$-coloring $(f_{v_{2}}, f_{u_{2}})$. Then we can find two triples from the set of triples such that they intersect, namely, we can find two triples $\left \{ h_{t},h_{j},h_{k} \right \}\subseteq H$ and $\left \{ h_{t},h_{q},h_{m} \right \}\subseteq H $ such that $f_{v_{1}}(h_{t})=f_{v_{1}}(h_{j})=f_{v_{1}}(h_{k})$, $f_{u_{1}}(h_{t})=f_{u_{1}}(h_{j})=f_{u_{1}}(h_{k})$, $f_{u_{2}}(h_{t})=f_{u_{2}}(h_{q})=f_{u_{2}}(h_{m})$ and $f_{v_{2}}(h_{t})=f_{v_{2}}(h_{q})=f_{v_{2}}(h_{m})$, where $j=q$ or $k=m$ is allowed. Then we can find five edges $\left \{e_{1},e_{2},h_{t},h_{j},h_{m}\right \} $ and denote the vertices of $h_{t},h_{j},h_{m}$ as $\{a_{t},b_{t}\}$, $\{a_{j},b_{j}\}$, $\{a_{m},b_{m}\}$, respectively. Through the above argument, we can see the following properties hold.

\begin{itemize}
    \item $f_{v_{1}}(h_{t})=f_{v_{1}}(h_{j})$; $f_{v_{2}}(h_{t})=f_{v_{2}}(h_{m})$; $f_{u_{1}}(h_{t})=f_{u_{1}}(h_{j})$; $f_{u_{2}}(h_{t})=f_{u_{2}}(h_{m})$;
    \item $f_{a_{t}}(e_{1})=f_{a_{t}}(e_{2})$; $f_{a_{j}}(e_{1})=f_{a_{j}}(e_{2})$; $f_{a_{m}}(e_{1})=f_{a_{m}}(e_{2});$
    \item $f_{b_{t}}(e_{1})=f_{b_{t}}(e_{2})$; $f_{b_{j}}(e_{1})=f_{b_{j}}(e_{2})$; $f_{b_{m}}(e_{1})=f_{b_{m}}(e_{2})$.
\end{itemize}

That means, none of the vertices in this $I_{5}$ we find above, can induce a rainbow coloring. Thus we have $C(n,I_{5})=\Omega(n^{{1/3}})$. The proof is finished.

\begin{remark}

If we want to find a special copy of $I_{6}$ which does not admit the rainbow coloring, we just need to add one edge from $H$ to the selected set $\left \{e_{1},e_{2},h_{t},h_{j},h_{m}\right \}$. We omit the details here.
\end{remark}

\subsubsection{Matchings with larger size}
We mainly prove that $C(n,I_{7})=\Omega(n^{{1/2}})$ here. Let $k:=cn^{{1/2}}$, where the constant $c$ is chosen to be very small. We still consider the complete graph with $3n$ vertices. For any set of $3n$ $k$-colorings of $K_{3n}$. Using the similar argument as in Sections~\ref{subsec:I4} and~\ref{subsec:I5}, there is a partition of the vertex set of $K_{3n}$ into $X\cup Y$ with $|X|=2n$ and $|Y|=n$ and a matching $M=\{e_{1},e_{2},\ldots,e_{n}\}$ in $X$, such that the number of triples $(e,e',p)\in M\times M\times Y$ with the property that $f_{p}(e)=f_{p}(e')$ is at least $\frac{n^{{5/2}}}{3c}$. We can also find a pair of edges in $M:=(e_{1},e_{2})$, such that there are at least $\frac{n^{{1/2}}}{3c}$ many vertices $p\in Y$ satisfying $f_{p}(e_{1})=f_{p}(e_{2})$, where $e_{1}=v_{1}u_{1}$, $e_{2}=v_{2}u_{2}$. Let $A$ be a subset of $Y$ consisting of the vertices $p\in Y$ such that $f_{p}(e_{1})=f_{p}(e_{2})$ and $|A|=\frac{n^{{1/2}}}{3c}$, we can form an arbitrary copy of matching $I_{|A|/2}$ in $A$, whose edge set is $H=\{h_{1},h_{2},\ldots,h_{|A|/2}\}$.

Now we consider the $k$-coloring function $f_{v_{1}}$ of edges in $H$. By pigeonhole principle, we can recursively find a set of disjoint quintuples of the form $(h_{5i+1},h_{5i+2},h_{5i+3},h_{5i+4},h_{5i+5})\in H\times H\times H\times H\times H$ with the property that $f_{v_{1}}(h_{5i+1})=f_{v_{1}}(h_{5i+2})=f_{v_{1}}(h_{5i+3})=f_{v_{1}}(h_{5i+4})=f_{v_{1}}(h_{5i+5})$ till it covers $99\%$ of the elements in $H$ (this is possible because we can set $c$ to be small enough such that ${1/100}|H|\gg 5k$). Then we do same operations but change $v_{1}$ to $u_{1}$, we can also obtain another set of disjoint quintuples, which covers $99\%$ of the elements in $H$. We continue the same operations twice, but replacing the corresponding vertices with $u_{2}$ and $v_{2}$. As a consequence, we can obtain four sets of internally disjoint quintuples, all of which have the desired properties. Next, we pick one quintuple from each set respectively, such that they contain some common element. That means there are $\left \{ h_{t},h_{k_{1}},h_{k_{2}},h_{k_{3}},h_{k_{4}} \right \}\subseteq H$, $\left \{ h_{t},h_{k_{5}},h_{k_{6}},h_{k_{7}},h_{k_{8}} \right \}\subseteq H$, $\left \{ h_{t},h_{k_{9}},h_{k_{10}},h_{k_{11}},h_{k_{12}} \right \}\subseteq H$ and $\left \{ h_{t},h_{k_{13}},h_{k_{14}},h_{k_{15}},h_{k_{16}} \right \}\subseteq H$ such that $f_{v_{1}}(h_{t})=f_{v_{1}}(h_{k_{1}})=f_{v_{1}}(h_{k_{2}})=f_{v_{1}}(h_{k_{3}})=f_{v_{1}}(h_{k_{4}})$, $f_{u_{1}}(h_{t})=f_{u_{1}}(h_{k_{5}})=f_{u_{1}}(h_{k_{6}})=f_{u_{1}}(h_{k_{7}})=f_{u_{1}}(h_{k_{8}})$, $f_{v_{2}}(h_{t})=f_{v_{2}}(h_{k_{9}})=f_{v_{2}}(h_{k_{10}})=f_{v_{2}}(h_{k_{11}})=f_{v_{2}}(h_{k_{12}})$ and  $f_{u_{2}}(h_{t})=f_{u_{2}}(h_{k_{13}})=f_{u_{2}}(h_{k_{14}})=f_{u_{2}}(h_{k_{15}})=f_{u_{2}}(h_{k_{16}})$. Moreover, we can find one edge form each set of the above four quintuples such that they are pairwise disjoint, without loss of generality, let $\left\{ h_{k_{1}},h_{k_{6}},h_{k_{11}},h_{k_{16}} \right\}$ be the set of edges with the disjoint property. Then we pick seven edges $\left \{e_{1},e_{2},h_{t},h_{k_{1}},h_{k_{6}},h_{k_{11}},h_{k_{16}}\right \} $ and denote the vertices of $h_{t},h_{k_{1}},h_{k_{6}},h_{k_{11}},h_{k_{16}}$ as $\{a_{t},b_{t}\}$, $\{a_{k_{1}},b_{k_{1}}\}$, $\{a_{k_{6}},b_{k_{6}}\}$, $\{a_{k_{11}},b_{k_{11}}\}$ and $\{a_{k_{16}},b_{k_{16}}\}$, respectively. Then we can see that the following properties hold.

\begin{itemize}
    \item $f_{v_{1}}(h_{t})=f_{v_{1}}(h_{{k_{1}}})$; $f_{v_{2}}(h_{t})=f_{v_{2}}(h_{{k_{9}}})$; $f_{u_{1}}(h_{t})=f_{u_{1}}(h_{{k_{5}}})$; $f_{u_{2}}(h_{t})=f_{u_{2}}(h_{{k_{16}}})$;
    \item $f_{a_{t}}(e_{1})=f_{a_{t}}(e_{2})$; $f_{b_{t}}(e_{1})=f_{b_{t}}(e_{2})$;
    \item $f_{a_{{k_{1}}}}(e_{1})=f_{a_{{k_{1}}}}(e_{2})$; $f_{a_{{k_{5}}}}(e_{1})=f_{a_{{k_{5}}}}(e_{2})$; $f_{a_{{k_{9}}}}(e_{1})=f_{a_{{k_{9}}}}(e_{2})$; $f_{a_{{k_{16}}}}(e_{1})=f_{a_{{k_{16}}}}(e_{2})$;
    \item  $f_{b_{{k_{1}}}}(e_{1})=f_{b_{{k_{1}}}}(e_{2})$; $f_{b_{{k_{5}}}}(e_{1})=f_{b_{{k_{5}}}}(e_{2})$; $f_{b_{{k_{9}}}}(e_{1})=f_{b_{{k_{9}}}}(e_{2})$; $f_{b_{{k_{16}}}}(e_{1})=f_{b_{{k_{16}}}}(e_{2})$.
\end{itemize}

That means, none of the vertices in this $I_{7}$ we find above, can give a rainbow coloring. Thus the proof of $C(n,I_{7})=\Omega(n^{{1/2}})$ is finished.

\begin{remark}
  
when $t>7$, if we want to find an $I_{t}$ that does not admit a rainbow coloring, we just need to add $t-7$ edges from $H$ to the selected set $\left \{e_{1},e_{2},h_{t},h_{k_{1}},h_{k_{6}},h_{k_{11}},h_{k_{16}}\right \} $. 
\end{remark}
\begin{remark}\label{rmk:pathlonger}
  The improved lower bounds for $C(n,P_{2t-1})$ and $C(n,P_{2t})$ can be obtained from the lower bounds for $C(n,I_{t})$, this is because if $H'\subseteq H$ is a subgraph of $H$ on the same set of vertices, then every lower bound for $C(n,H')$ implies the same lower bound for $C(n,H)$. This already shows that $C(n,P_{t})=\Omega(n^{1/2})$ for $t\geqslant 13$. Actually we can slightly improve this result further via combining the ideas in the proof of $C(n,I_{7})=\Omega(n^{{1/2}})$ and in Remark~\ref{remark:p2k2k2} to show that $C(n,P_{2}\sqcup K_{2}\sqcup K_{2}\sqcup K_{2})=\Omega(n^{{1/2}})$. As the proof is very similar, we omit the details here. Note that any path of length at least $8$ contains $P_{2}\sqcup K_{2}\sqcup K_{2}\sqcup K_{2}$, hence we have $C(n,P_{t})=\Omega(n^{1/2})$ for $t\geqslant 8$. The third result in Theorem~\ref{thm:paths} follows.
\end{remark}

\section{Graphs with at least $6$ edges have polynomial lower bounds }\label{sec:C4}

In this section, we will prove that for any graph $H$ with at least $6$ edges, there exists some constant $b=b(H)>0$ such that $C(n,H)=\Omega(n^{b})$. First, we need the following auxiliary lemma.

\begin{lemma}\label{lemma:666}
If a graph $H$ has at least $6$ edges, then $H$ must contain at least one member of the family $\mathcal{H}_{6}$ as a subgraph, where $\mathcal{H}_{6}=\{C_{4}$, $P_{4}$, $I_{4}$, $S_{4}$, $P_{2}\sqcup P_{2}$, $P_{2}\sqcup K_{2}\sqcup K_{2}\}$.
\end{lemma}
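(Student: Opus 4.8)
The plan is to show that any graph $H$ with at least $6$ edges contains one of the small graphs in $\mathcal{H}_6 = \{C_4, P_4, I_4, S_4, P_2 \sqcup P_2, P_2 \sqcup K_2 \sqcup K_2\}$ as a subgraph. The key observation is a contrapositive reformulation: I would characterize all graphs that contain \emph{none} of these six graphs as a subgraph, and prove that each such graph has at most $5$ edges. Forbidding $C_4$, $P_4$, $S_4$ (a $K_{1,3}$), $I_4$ (a matching of size $4$), $P_2 \sqcup P_2$ (two disjoint paths of length $2$), and $P_2 \sqcup K_2 \sqcup K_2$ (a path of length $2$ plus two disjoint edges) is extremely restrictive, so the $\mathcal{H}_6$-free graphs should form a short explicit list.

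First I would analyze the components of an $\mathcal{H}_6$-free graph $H$. Since $S_4 = K_{1,3}$ is forbidden, every vertex has degree at most $2$, so each component is a path or a cycle. Since $P_4$ (path on $4$ edges, i.e.\ $5$ vertices) is forbidden, every path component has at most $3$ edges, and since $C_4$ is forbidden together with the fact that a cycle $C_m$ with $m \geq 5$ contains $P_4$, the only possible cycle components are $C_3$. Next I would use the disjoint-union forbidden graphs to bound the number of components carrying edges. Forbidding $I_4$ means $H$ has no matching of size $4$; since $C_3$ contains a $P_2$ and $K_2$, and a $P_3$ (path with two edges) contains a $P_2$, while a $P_2$ path contains a $P_2$, forbidding $P_2 \sqcup P_2$ and $P_2 \sqcup K_2 \sqcup K_2$ severely limits how many ``heavy'' components (those containing a $P_2$) can coexist: at most one component may contain a $P_2$ (otherwise $P_2 \sqcup P_2$ appears), unless the components are so small they only contain a $K_2$.

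Carrying this out: at most one component can contain a copy of $P_2$; call it the \emph{main} component, which is then one of $P_2$ (path, $2$ edges), $P_3$ (path, $3$ edges), or $C_3$ ($3$ edges). All other edge-bearing components must be single edges $K_2$. If the main component contains a $P_2$, then to avoid $P_2 \sqcup K_2 \sqcup K_2$ there can be at most one extra $K_2$, giving at most $3 + 1 = 4$ edges; and if there is no main component with a $P_2$, every component is a single edge, but then to avoid $I_4$ there are at most $3$ edges. Either way $H$ has at most $4$ edges — in fact the bound even beats $5$. Hence any $H$ with at least $6$ (indeed at least $5$) edges must contain a member of $\mathcal{H}_6$.

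The main obstacle is simply being careful and exhaustive with the case analysis of small components and the interaction of the three disjoint-union obstructions $I_4$, $P_2 \sqcup P_2$, and $P_2 \sqcup K_2 \sqcup K_2$; one must make sure that isolated vertices are correctly ignored (the statement is about subgraphs, so isolated vertices are irrelevant) and that a $C_3$ is correctly accounted as containing both a $P_2$ and, separately, a $K_2$ disjoint from that $P_2$ is \emph{not} available inside a single $C_3$ — so the bookkeeping of which small graphs live inside $C_3$ versus $P_3$ must be done precisely. Once the structural classification of $\mathcal{H}_6$-free graphs is pinned down, the edge count is immediate.
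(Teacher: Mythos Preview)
Your overall contrapositive strategy is sound, but there is a genuine error: in this paper $S_t$ denotes the star with $t$ leaves, so $S_4 = K_{1,4}$, not $K_{1,3}$. Consequently, forbidding $S_4$ only gives maximum degree at most $3$, not at most $2$, and your reduction ``each component is a path or a cycle'' fails. In fact your strengthened conclusion (``any $H$ with at least $5$ edges already contains a member of $\mathcal{H}_6$'') is false: take a triangle with one pendant edge together with a disjoint $K_2$. This graph has $5$ edges, maximum degree $3$ (so no $S_4=K_{1,4}$), no $C_4$, longest path of length $3$ (so no $P_4$), matching number $3$ (so no $I_4$), and every $P_2$ meets the triangle so neither $P_2\sqcup P_2$ nor $P_2\sqcup K_2\sqcup K_2$ occurs. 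Thus the bound $6$ in the lemma is tight and your argument, as written, cannot be correct.

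Once you fix the meaning of $S_4$, the structural classification becomes more delicate: with maximum degree $3$ you must also allow components such as $K_{1,3}$, a triangle with a pendant, and $K_4$ minus an edge, and then rule each out or bound its coexistence with other components using the remaining obstructions. The paper's own proof avoids this full classification and instead argues forwards by cases on the number of connected components of $H$ (assuming exactly $6$ edges and no isolated vertices), finding a member of $\mathcal{H}_6$ directly in each case. Your contrapositive approach can be made to work, but it needs the corrected degree bound and a more careful enumeration of admissible components.
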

\begin{proof}
It suffices to prove the lemma when $H$ has exactly $6$ edges. Without loss of generality, we can assume that $H$ has no isolated vertex. Our proof is based on the number of connected components of $H$. 
\begin{enumerate}
    \item If $H$ has more than $4$ connected components, then it must contain a copy of $I_{4}$.
    \item If $H$ has $3$ connected components, then by pigeonhole principle, there exists some component with at least $2$ edges, then $H$ must contain a copy of $P_{2}\sqcup K_{2}\sqcup K_{2}$.
    \item If $H$ has $2$ connected components, suppose both components have at least $2$ edges, then $H$ must contain a copy of $P_{2}\sqcup P_{2}$. Next we assume that some component $H_{1}$ has exactly $5$ edges. If $H_{1}$ contains a triangle, then $H_{1}$ contains either a copy of $P_{2}\sqcup K_{2}$ or a copy of $C_{4}$, which means $H$ contains either a copy of $P_{2}\sqcup K_{2}\sqcup K_{2}$ or $C_{4}$. Then we consider the case that $H_{1}$ is a tree, if $H_{1}$ does not contain $S_{4}$ or $P_{4}$, then it is easy to check that $H_{1}$ contains a copy of $P_{2}\sqcup K_{2}$, thus $H$ contains a copy of $P_{2}\sqcup K_{2}\sqcup K_{2}$.
    \item If $H$ has only one connected component, suppose $H$ does not contain a cycle, then either $H$ contains a copy of $S_{4}$ or the longest path in $H$ has at least $4$ edges. Then we consider the case that $H$ contains at least one cycle. Note that if $H$ does not contain a copy of $C_{4}$ or $P_{4}$, then $H$ contains a triangle. In this case, if the remaining three edges are incident to the same vertex on the triangle, then $H$ contains a copy $S_{4}$, otherwise, $H$ will contain a copy of $P_{4}$.
    
\end{enumerate}
\end{proof}
It remains to show that $C(n,C_{4})$ also has the polynomial lower bound, we prove this result as follows. 

\begin{proposition}\label{prop:c4}
\begin{equation*}
    C(n,C_{4})=\Omega(n^{1/3}).
\end{equation*}
\end{proposition}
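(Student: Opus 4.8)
The plan is to follow the same counting-then-pigeonhole template used throughout the paper, adapted to the $4$-cycle. Set $k := cn^{1/3}$ for a sufficiently small constant $c>0$, and work inside $K_{2n}$ with an arbitrary family $f$ of $2n$ colorings using $k$ colors. First I would invoke (a minor variant of) Lemma~\ref{lemma:cliques}: partition $V(K_{2n})$ into $A$ and $B$ of size $n$ each, and count quadruples $(a,b_1,b_2)\in A\times B\times B$ with $f_a(ab_1)=f_a(ab_2)$; by convexity of $\binom{x}{2}$ each $a$ contributes at least $k\binom{n/k}{2} \geqslant n^2/3k$ such pairs, so there are at least $n^{3}/3k = \Omega(n^{8/3})$ quadruples in total. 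By pigeonhole there is a fixed pair $(b_1,b_2)\in B\times B$ and a set $A' \subseteq A$ with $|A'| \geqslant n^{8/3}/(3k n^2) = \Omega(n^{1/3})$ such that $f_a(ab_1)=f_a(ab_2)$ for every $a\in A'$.

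Next I would use the two colorings $f_{b_1}$ and $f_{b_2}$ to prune $A'$. Applying pigeonhole to the $k$-coloring $a \mapsto f_{b_1}(ab_1)$ on $A'$ yields a subset $A''\subseteq A'$ with $|A''| \geqslant |A'|/k = \Omega(1)$ on which $f_{b_1}(ab_1)$ is constant; applying pigeonhole again to $f_{b_2}(\cdot b_2)$ on $A''$ gives $A'''\subseteq A''$ of size $|A'''| \geqslant |A''|/k = \Omega(n^{1/3}/k^2) = \Omega(c^{-3})$, which is at least $2$ once $c$ is small enough. Pick two distinct vertices $a_1,a_2\in A'''$. Then the four edges $a_1b_1,\ a_1b_2,\ a_2b_1,\ a_2b_2$ form a copy of $C_4$ on vertex set $\{a_1,a_2,b_1,b_2\}$, and we check no vertex induces a rainbow copy: at $a_1$ and at $a_2$ the two incident edges of the $C_4$ are $ab_1,ab_2$, which share a color since $a_i\in A'$; at $b_1$ the two incident edges are $a_1b_1,a_2b_1$, monochromatic under $f_{b_1}$ since $a_1,a_2\in A''$; at $b_2$ the two incident edges $a_1b_2,a_2b_2$ are monochromatic under $f_{b_2}$ since $a_1,a_2\in A'''$. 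Hence this $C_4$ admits no rainbow coloring, giving $C(n,C_4) > k = \Omega(n^{1/3})$.

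The only genuine subtlety — and the place to be slightly careful — is the bookkeeping of the exponents: one must make sure that after the two $\binom{x}{2}$-type losses (the global count and the second-moment convexity bound) and the two subsequent divisions by $k$, the final set $A'''$ still has size bounded below by an absolute constant, which forces the choice of exponent $1/3$ (a larger exponent would make $|A'''|\to 0$). This is exactly the same balancing already carried out in Section~\ref{sec:clique}, so I expect no real obstacle; the proof is essentially a one-step simplification of the $K_8$ argument, where instead of recursively monochromatizing a clique on $A'$ we only need each of $f_{b_1},f_{b_2}$ to fix one color class, because $C_4$ only asks for two edges at each of $b_1,b_2$ rather than a whole clique. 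Combined with Lemma~\ref{lemma:666}, the lower bounds already established for $P_4$, $I_4$, $S_4$, $P_2\sqcup P_2$, and $P_2\sqcup K_2\sqcup K_2$, and the subgraph monotonicity of $C(n,\cdot)$ noted in Section~\ref{sec:pre}, this completes the proof of Theorem~\ref{thm:666}.
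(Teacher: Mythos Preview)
Your approach is essentially identical to the paper's: fix a pair $(b_1,b_2)$ by pigeonhole on the triple count from Lemma~\ref{lemma:cliques}, then prune $A'$ using the product coloring $(f_{b_1}(b_1a),f_{b_2}(b_2a))$ (the paper does this in one $k^2$-pigeonhole step rather than two sequential $k$-steps, but this is cosmetic). The only blemish is an arithmetic slip: dividing the $n^3/(3k)$ triples by $n^2$ pairs gives $|A'|\geqslant n/(3k)=\Omega(n^{2/3})$, not $\Omega(n^{1/3})$ as you wrote---with the correct exponent your subsequent bound $|A'|/k^2=\Omega(c^{-3})\geqslant 2$ is exactly right (and indeed your written value $n^{1/3}/k^2$ would tend to $0$, so the typo is self-correcting in light of your correct final answer).
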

\begin{proof}[Proof of Proposition~\ref{prop:c4}]
 Let $k:=cn^{{1/3}}$, where the constant $c$ is very small. We consider a complete graph with $2n$ vertices. For any set of $2n$ $k$-colorings of $K_{2n}$, we aim to find a copy of $C_{4}$ such that none of its vertices can induce a rainbow coloring.

By Lemma~\ref{lemma:cliques}, we can partition the vertex set of $K_{2n}$ into two part $A$ and $B$ with $|A|=|B|=n$ and the number of triples $(a,b_{1},b_{2})\in A\times B\times B$ such that $f_{a}(ab_{1})=f_{a}(ab_{2})$ is at least $\frac{n^{{8/3}}}{3c}$. By pigeonhole principle, there is a pair of vertices in $B$, called $(b_{1},b_{2})$, such that there are at least $\frac{n^{{2/3}}}{3c}$  many distinct vertices $a\in A$ satisfying $f_{a}(ab_{1})=f_{a}(ab_{2})$. Then we choose a subset $A'$ which consists of the vertices satisfying the above property. It is obvious that $|A'|\ge \frac{n^{{2/3}}}{3c}$. Consider the $k^2$-coloring $(f_{b_{1}}(b_{1}a),f_{b_{2}}(b_{2}a))$ for all $a\in A'$, by pigeonhole principle, there are $\frac{1}{3c^{3}}\gg 2$ elements in $A'$ receive the same color. Then we pick two vertices and write them as $a_{1}$ and $a_{2}$. Now we find a copy of $C_{4}$ with vertex set $\left \{a_{1}, b_{2}, a_{2}, b_{1}\right \}$ with the following properties 
\begin{itemize}
    \item $f_{a_{1}}(a_{1}b_{1})=f_{a_{1}}(a_{1}b_{2})$; $f_{a_{2}}(a_{2}b_{1})=f_{a_{2}}(a_{2}b_{2})$; $f_{b_{1}}(b_{1}a_{2})=f_{b_{1}}(b_{1}a_{1})$; $f_{b_{2}}(b_{2}a_{2})=f_{b_{2}}(b_{2}a_{1})$.
\end{itemize}
That means, none of the vertices in this $C_{4}$ we find here, can induce a rainbow coloring. The proof of $C(n,C_{4})=\Omega(n^{{1/3}})$ is finished.

\end{proof}
The proofs of Theorems~\ref{thm:paths},~\ref{thm:stars},~\ref{thm:matchings}, Proposition~\ref{prop:c4}, and Remarks~\ref{remark:p2k2k2} and~\ref{remark:p2p2} together give the proof of Theorem~\ref{thm:666}, as well as we can easily deal with the isolated vertices in these cases.

\section{Concluding remarks and open problems}\label{sec:conclusions}

One of the most interesting problems in this topic proposed by Alon and Ben-Eliezer~\cite{2011AlonJC} was to decide whether the order of $C(n,P_{t})$ is polynomial in $n$ with $t\in\{3,4,5,6\}$. In this paper, we show that $C(n,P_{4})=\Omega(n^{{1/5}})$, and provide better bounds for $P_{5}$ and $P_{6}$. Now the case of $P_{3}$ remains open, through we cannot answer this question on $P_{3}$, we give the following polynomial lower bound $C(n,C_{4})=\Omega(n^{{1/3}})$, which perhaps gives some evidence that $C(n,P_{3})$ is order of $n^{c}$ for some constant $c>0$.

We are also interested in some other small graphs which have few vertices and edges. The constant $6$ in Theorem~\ref{thm:666} cannot be directly improved by our results. However, we suspect that the constant $6$ is not the best possible, it will be interesting to improve further. Moreover, motivated by the first result in Theorem~\ref{thm:alon}, one can further consider the graphs with four edges, for instance, the graph consists of a triangle plus a pendant edge and the trees with four edges.

\section*{Acknowledgement}
The authors are extremely grateful to Prof Hong Liu for providing an important idea in the proof of Theorem~\ref{thm:cliques} and offering several kind suggestions which are very helpful to the improvement of the
presentation of this paper.

\bibliographystyle{abbrv}
\bibliography{local}
\end{document}